\DeclareMathOperator{\divergence}{div}
\newtheorem{theorem}{Theorem}[section]
\newtheorem{definition}[theorem]{Definition}
\newtheorem{proposition}[theorem]{Proposition}
\newtheorem{remark}[theorem]{Remark}
\numberwithin{equation}{section}
\begin{document}

\title[A double-phase Dirichlet problem involving the 1-Laplacian]{On the solutions of a double-phase Dirichlet problem involving the 1-Laplacian}
\author{Alexandros Matsoukas }
\address{Department of Mathematics\\
National Technical University of Athens\\
Iroon Polytexneiou 9\\
15780 Zografou\\
Greece}
\email{alexmatsoukas@mail.ntua.gr}
\author{Nikos Yannakakis}
\address{Department of Mathematics\\
National Technical University of Athens\\
Iroon Polytexneiou 9\\
15780 Zografou\\
Greece}
\email{nyian@math.ntua.gr}
\subjclass[2000]{Primary  35J60;  35J25; 35J75 Secondary  46E35;
 35J92; 35D30.}
\commby{}

%% Abstract
\begin{abstract}
%% Text of abstract
In this paper we study a double-phase problem involving the 1-Laplacian with non-homogeneous Dirichlet boundary conditions and show the existence and uniqueness of a solution in a suitable weak sense. We also provide a variational characterization of this solution via the corresponding minimization problem.
\end{abstract}

\maketitle

\noindent\textbf{Keywords:} Double-phase problem, 1-Laplacian, Generalized Orlicz space, Weighted Sobolev space

\section{Introduction}
\label{sec1}

This paper is devoted to the study of the double-phase problem involving the 1-Laplacian with non-homogeneous Dirichlet boundary conditions

\begin{equation}\label{limiting}
\left\{
\begin{array}{rcl}
-\divergence \left(\frac{\nabla u}{|\nabla u|} +a(x)|\nabla u|^{q-2}\nabla u \right)&=&0\,\text{ in } \Omega,\\
u&=&h \, \text{ on }\partial\Omega,\,
\end{array}
\right.
\end{equation}
where $\Omega \subset \mathbb{R}^N$ is a bounded Lipschitz domain, $a(\cdot)$ is a bounded function with $a\ge 0$ a.e. in $\Omega$ and the datum $h$ belongs to the fractional Sobolev space $W^{1-\frac{1}{q},q}(\partial \Omega)$.

The above problem, may be seen as the limiting case as $p\to 1$, of $p,q$ double-phase problems, that is equations driven by the differential operator
\begin{equation}\label{pqdp}
\divergence \left(|\nabla u|^{p-2}\nabla u +a(x)|\nabla u|^{q-2}\nabla u \right)\,\text{ for } \, u \in W^{1,\theta_p}(\Omega),\\
\end{equation}
which is related to the so-called double-phase functional
\begin{equation*}
 u \mapsto \int_{\Omega} (|\nabla u|^{p} +a(x)|\nabla u|^{q})\, d x, 
\end{equation*}
with $1<p<q$.

The double-phase functional was first introduced by Zhikov \cite{Zhikov} to model strongly anisotropic materials with two hardening exponents. It enjoys the interesting feature of obeying non-standard growth conditions of $p,q$ type, according to the terminology of Marcelini \cite{Marcelini1}.
Consequently, the term ``double-phase problems'' is used in the literature to describe this characteristic feature: a change in ellipticity type, which is entirely determined by the function $a(\cdot)$. These different phases are called the p-phase and the q-phase, occurring on the sets $\{a(x)=0\}$ and $\{a(x)>0\}$ respectively. Following the pioneering works of Mingione and co-workers \cite{mingione_1}, \cite{mingione_2}, double-phase problems have attracted significant interest and have been extensively studied by many authors (see for example \cite{Marcelini2}, \cite{MinRad}, \cite{P}, \cite{PRY} \cite{pap_2}, \cite{R} and the references therein). Note that the case $p=1$, which is the topic of this paper, has been tackled in \cite{EHH1}, \cite{EHH2}, \cite{GLM}, \cite{harjulehto_1}, \cite{MY} and hence seems to be quite overlooked. One of the aims of the present study is an attempt to fill this gap.

In \cite{MY} a double-phase problem involving the 1-Laplacian was studied and its solution was found as the limit of solutions of approximate $p,q$ double-phase problems, as $p\to 1$. In this paper we will follow the same approach in order to show that there exists a suitably defined weak solution of problem \eqref{limiting}, which is unique due to the regularizing effect of the weighted term. Additionally we will also provide a variational characterization of this solution via the corresponding minimization problem.   

The natural function space to look for a solution of problem \eqref{limiting} is $ W^{1,1}(\Omega)\cap W^{1,q}_{a}(\Omega)$ where $W^{1,q}_{a}(\Omega)$ denotes a suitable weighted Sobolev space. However, since the space $W^{1,1}(\Omega)$ lacks important compactness properties, we will also use the larger space $BV(\Omega)$ of functions of bounded variation. 

Let us here make some important observations. First, we need to give sense to $\frac{\nabla u}{|\nabla u|}$ which appears in the formal definition of the 1-Laplacian operator, especially when $\nabla u$ vanishes on a non-negligible set. In order to overcome this difficulty, we replace the above quotient by a bounded vector field $z$. This is by now a standard idea for problems involving the 1-Laplacian, see for instance the work of Maz\'{o}n, Rossi and Segura De Le\'{o}n \cite{MRS}, in relation to the least gradient problem and 1-harmonic maps where the authors, motivated by the definition of solution for the total vatiation flow \cite{ABCM}, and using the theory of Anzellotti \cite{Anzellotti}, introduced a notion of solution based on a suitable pairing $(z,Du)$, between a vector field $z$ and the measure $Du$ for $u\in BV(\Omega)$. This pairing serves as a generalization of the inner product and whenever $Du=\nabla u \, \mathcal{L}^N$, as it happens in our case, it reduces to $(z,Du)=z\cdot \nabla u $. For an extensive overview of the least gradient problem, we refer the interested reader to the monograph \cite{gornymazon}.

Another usual difficulty when dealing with the 1-Laplacian is to give a precise meaning to the boundary condition. Note that as the solution $u$ is obtained as a weak* limit in $BV(\Omega)$, we need to address the lack of
 weak* continuity of the trace operator there. A way to overcome this obstacle, as was also done in \cite{MY}, is to assume that the weight function $a$ is bounded away from zero on $\partial \Omega$. Then a trace may be defined in the corresponding weighted Sobolev space $W^{1,q}_{a}(\Omega)$ and the boundary condition has to be satisfied in this sense.

\section{Notation and preliminaries}

In this section, we recall some definitions of the function spaces involved in our analysis.

\subsection{Generalized Orlicz spaces} For this part we follow mainly the survey paper \cite{P}. Let $\Omega \subset \mathbb{R}^N$ be a bounded Lipschitz domain, with $1<p<q<N$ and let $a\in L^{\infty}(\Omega)$  be a non-negative weight function. For fixed $q$ and with $p$ taking values in the above range, the functions
\[\theta_p :\Omega\times\mathbb R_+\rightarrow\mathbb R_+\]
defined by
$$ \theta_p(x,t)=t^p+a(x)t^q $$
are uniformly convex, generalized $\Phi$-functions \cite[Remark 2.22]{P} and satisfy the $(\Delta_2)$ condition \cite[Proposition 2.6]{P}.
The double-phase generalized Orlicz space is defined as
$$L^{\theta_p}(\Omega)=\{u: \Omega \to \mathbb{R}\, \text{measurable} : \rho_{\theta_p} (u) < +\infty \},$$
with modular given by
$$\rho_{\theta_p}(u)=\int_{\Omega} \theta_p(x,|u(x)|)\,d x.$$
When equipped with the so-called Luxemburg norm
$$\|u\|_{\theta_p} = \inf \{ \lambda>0 : \rho_{\theta_p} (\frac{u}{\lambda}) \le 1\},$$
$L^{\theta_p}(\Omega)$ becomes a uniformly convex (and hence reflexive) Banach space \cite[Proposition 2.23]{P}. 

\noindent The generalized Orlicz-Sobolev space is defined as
$$W^{1,\theta_p}(\Omega)=\left\{u\in L^{\theta_p}(\Omega): |\nabla u|\in L^{\theta_p}(\Omega)\right\},$$
where $\nabla u$ is the weak gradient of $u$, and equipped with the norm
$$\|u\|_{W^{1,\theta_p}}=\|u\|_{\theta_p} + \|\nabla u\|_{\theta_p}\,.$$
is a reflexive Banach space.

\noindent As usual we define
\[W^{1,\theta_p}_0(\Omega)=\overline{C_{0}^{\infty}(\Omega)}^{\|\cdot\|_{1,\theta_p}}\,.\]

\noindent If $a\in C^{0,1}(\Omega)$ and $\frac{q}{p}<1+\frac{1}{N}$,
then the maximal operator is bounded on $L^{\theta_p}(\Omega)$ and the constant that bounds it is independent of $p$ (see \cite[Theorem 4.3.4]{harjbook}). Hence the Poincare inequality 
\begin{equation}
    \label{poincare}
\|u\|_{\theta_p} \leq C \|\nabla u\|_{\theta_p}
\end{equation}
holds for all $u\in W_0^{1,\theta_p}(\Omega)$ and the constant $C$ may also be chosen independently of $p$ (see \cite[Theorem 6.2.8]{harjbook}).  

For more details on generalized Orlicz spaces we refer the interested reader to the book \cite{harjbook}.

\subsection{Weighted Lebesgue and Sobolev spaces}
We begin with the definition of the Muckenhoupt class $A_q$.
\begin{definition}
A weight $a\in L^\infty(\Omega)$ with $a(x)>0$ a.e. in $\Omega$ belongs to the Muckenhoupt class $A_q$ if
\[\sup_Q\left(\frac{1}{|Q|}\int_Q a(x) \, d x\right) \left(\frac{1}{|Q|}\int_Q a(x)^{-\frac{1}{q-1}} \, d x\right)^{q-1} < \infty,\]
where the supremum is taken over all cubes $Q$ with sides parallel to the coordinate axes.
\end{definition}
\noindent From now on we will always assume that the weight $a$ belongs to $A_q$. The weighted Lebesgue and Sobolev spaces are defined as
$$L^q_a (\Omega)=\{u: \Omega \to \mathbb{R}\, \text{measurable} \, : \int_{\Omega} a(x)|u|^q \, d x <+\infty\}$$
and
$$W^{1,q}_{a}(\Omega)=\left\{u\in L^{q}_{a}(\Omega): |\nabla u|\in L^{q}_{a}(\Omega)\right\}\,.$$
Equipped with the norms 
$$\|u\|_{L^q_a} = \left( \int_{\Omega} a(x)|u|^q \, d x \right)^{\frac{1}{q}} \text{ and } \|u\|_{W^{1,q}_a}=\|u\|_{L^q_a}+\|\nabla u\|_{L^q_a}$$ $L^q_a (\Omega)$ and $W^{1,q}_{a}(\Omega)$  become reflexive Banach spaces.

\noindent As before 
\[W^{1,q}_{a, 0}(\Omega)=\overline{C_{0}^{\infty}(\Omega)}^{\|\cdot\|_{{W^{1,q}_a}}}\,.\]

%%%%%%%%%%%%%%%%
If $\Omega$ is a bounded Lipschitz domain, $a\in C(\overline{\Omega})$ and is non-zero on $\partial\Omega$ then we can define a trace on $W^{1,q}_{a}(\Omega)$. 
%%%%%%%%%%%%%%%%%%%%%%%%%%%%%%%%%%%%%%%%%%%%%%%%%%%%%%%%%%%%%%  
\begin{proposition}[\cite{MY}, Proposition 2.2]
\label{prop:trace}
Let $a\in C(\overline{\Omega})$ with $a\geq 0$ a.e. in $\Omega$, such that $a(x)\neq 0$, for all $x\in \partial\Omega$. Then there exists a bounded linear operator
\[T:W^{1,q}_{a}(\Omega)\rightarrow L^q(\partial\Omega)\]
such that
\[Tu=u|_{\partial \Omega} \text{ for all } u\in C(\overline{\Omega})\cap W^{1,q}_{a}(\Omega).\]
\end{proposition}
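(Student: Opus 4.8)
The plan is to construct the trace operator by first establishing it on the dense subspace $C(\overline{\Omega})\cap W^{1,q}_a(\Omega)$ via the classical trace and then extending by density and continuity, so the heart of the matter is a trace inequality of the form $\|u|_{\partial\Omega}\|_{L^q(\partial\Omega)}\le C\|u\|_{W^{1,q}_a(\Omega)}$ for smooth $u$. First I would exploit the hypothesis that $a$ is continuous on $\overline{\Omega}$ and strictly positive on the compact set $\partial\Omega$: there exist a constant $\delta>0$ and a neighbourhood $U$ of $\partial\Omega$ in $\overline{\Omega}$ such that $a(x)\ge\delta$ for all $x\in U\cap\Omega$. On this collar neighbourhood the weighted norm is therefore comparable to the unweighted one, namely $\|v\|_{W^{1,q}(U\cap\Omega)}\le\delta^{-1/q}\|v\|_{W^{1,q}_a(U\cap\Omega)}$, which returns us to a standard (unweighted) Sobolev space on a Lipschitz domain.

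Next I would invoke the classical trace theorem for $W^{1,q}$ on the bounded Lipschitz domain $\Omega$ (or work locally with the Lipschitz collar $U\cap\Omega$, using a partition of unity and flattening the boundary), which gives a bounded linear trace operator $W^{1,q}(\Omega)\to L^q(\partial\Omega)$ agreeing with restriction on functions continuous up to the boundary. Composing the inclusion/localization estimate from the previous step with this classical trace bound yields, for every $u\in C(\overline{\Omega})\cap W^{1,q}_a(\Omega)$,
\[
\|u|_{\partial\Omega}\|_{L^q(\partial\Omega)}\le C\,\|u\|_{W^{1,q}(U\cap\Omega)}\le C\,\delta^{-1/q}\,\|u\|_{W^{1,q}_a(\Omega)}.
\]
Since $C(\overline{\Omega})\cap W^{1,q}_a(\Omega)$ is dense in $W^{1,q}_a(\Omega)$ — which follows because $a\in A_q$, so that $W^{1,q}_a(\Omega)$ is a well-behaved weighted Sobolev space on a Lipschitz domain in which smooth functions are dense (a standard mollification-plus-cutoff argument, the weight being locally bounded above and below away from its zero set) — the operator $Tu:=u|_{\partial\Omega}$ extends uniquely to a bounded linear operator $T:W^{1,q}_a(\Omega)\to L^q(\partial\Omega)$, and by construction $Tu=u|_{\partial\Omega}$ for all $u\in C(\overline{\Omega})\cap W^{1,q}_a(\Omega)$.

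The main obstacle I anticipate is the density of $C(\overline{\Omega})\cap W^{1,q}_a(\Omega)$ in $W^{1,q}_a(\Omega)$: unlike the unweighted case, approximation by smooth functions in weighted Sobolev spaces can fail for bad weights, so one must genuinely use $a\in A_q$ (or the stronger structural assumptions here) to justify that mollification converges in the weighted norm. A secondary technical point is that the decomposition of $\Omega$ into the collar $U\cap\Omega$, where $a$ is bounded below, and the interior, where the trace plays no role, must be handled so that the constant depends only on $\delta$, $q$, and the Lipschitz character of $\Omega$; this is routine once the collar is fixed. All remaining steps — the comparison of weighted and unweighted norms on $U\cap\Omega$, the invocation of the classical trace theorem, and the bounded linear extension by density — are standard. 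Since this proposition is quoted verbatim from \cite{MY}, the cleanest exposition is simply to cite that reference, but the argument above is the one underlying it.
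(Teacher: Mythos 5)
The paper offers no proof of this proposition; it is imported verbatim from \cite{MY}, and the argument there is exactly the one you sketch: use continuity and strict positivity of $a$ on the compact set $\partial\Omega$ to get a collar where $a\ge\delta>0$, observe that the weighted norm dominates the unweighted $W^{1,q}$ norm there, and fall back on the classical trace theorem for Lipschitz domains. So your approach is the right one. The one soft spot is the final step. You route the construction through density of $C(\overline{\Omega})\cap W^{1,q}_a(\Omega)$ in $W^{1,q}_a(\Omega)$, which is density of functions continuous \emph{up to the boundary}; this is strictly stronger than the Meyers--Serrin type statement (Proposition \ref{prop:MS}) available in the paper, which only produces approximants in $C^\infty(\Omega)$, and your justification (``mollification plus cutoff, the weight being locally bounded above and below away from its zero set'') glosses over the interior region where $a$ may vanish on a set of positive measure and where convergence of mollifications in $L^q_a$ genuinely needs the $A_q$ hypothesis --- a hypothesis that, note, does not appear in the statement of the proposition itself. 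The cleaner fix, which also matches \cite{MY}, is to avoid density altogether: fix a cutoff $\eta\in C^\infty(\overline{\Omega})$ with $\eta=1$ near $\partial\Omega$ and $\operatorname{supp}\eta$ contained in the collar $U$. For \emph{any} $u\in W^{1,q}_a(\Omega)$ one has $\eta u\in W^{1,q}(\Omega)$ with $\|\eta u\|_{W^{1,q}(\Omega)}\le C(\eta)\,\delta^{-1/q}\|u\|_{W^{1,q}_a(\Omega)}$, so one may simply define $Tu:=\operatorname{tr}(\eta u)$ using the classical trace; this is independent of the choice of $\eta$ (two admissible cutoffs agree near $\partial\Omega$, so the difference has zero classical trace) and agrees with restriction on $C(\overline{\Omega})\cap W^{1,q}_a(\Omega)$. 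This yields the bounded operator on all of $W^{1,q}_a(\Omega)$ directly, with no appeal to density and no use of $a\in A_q$. Your version is repairable (split $u=\eta u+(1-\eta)u$ and approximate the two pieces separately), but as written the density claim is the gap.
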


\begin{remark}
\label{gagliardo}
   By a well-known theorem of Gagliardo (see \cite{Gagliardo}) we have that for a Lipschitz domain $\Omega$, the range of the trace operator on $W^{1,q}(\Omega)$ is $W^{1-\frac{1}{q},q}(\partial \Omega)$. Moreover, there exists a bounded linear operator 
   $$\mathcal{E}:  W^{1-\frac{1}{q},q}(\partial \Omega )\to W^{1,q}(\Omega)$$ through which any function $\psi \in W^{1-\frac{1}{q},q}(\partial \Omega )$ can be extended to a function $v\in W^{1,q}(\Omega )$ such that $v|_{\partial \Omega}=\psi$, in the trace sense. Note that under the assumptions of Proposition \ref{prop:trace}, the trace of a function $v\in W^{1,q}_{a}(\Omega)$ belongs to $W^{1-\frac{1}{q},q}(\partial \Omega )$.
\end{remark}

The following Meyers-Serrin type approximation theorem will play a significant role in what follows. 
  
\begin{proposition}\label{prop:MS}
Assume that $a\in A_{q}$. If $u\in W^{1,1}(\Omega)\cap W^{1,q}_{a}(\Omega)$, then there exists a sequence $(v_n)$ in $W^{1,1}(\Omega)\cap C^\infty(\Omega)$ such that
\begin{align*}
v_n &\to u\,, \text{ in } W^{1,1}(\Omega),\\
\nabla v_n &\to \nabla u\,, \text{ in } L^{q}_{a}(\Omega;\mathbb{R}^N).
\end{align*}
If in addition $a(\cdot)$ is as in Proposition \ref{prop:trace} then $v_{n}|_{\partial \Omega} = u|_{\partial \Omega}$, in the sense of the trace in $W^{1,q}_{a}(\Omega)$.
\end{proposition}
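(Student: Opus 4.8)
The plan is to run the classical Meyers--Serrin scheme --- exhaust $\Omega$ by relatively compact open subsets and glue together local mollifications by means of a partition of unity --- but at each step to choose the mollification parameter small enough to control \emph{simultaneously} the $W^{1,1}(\Omega)$--norm and the $L^q_a$--norm, both of the function and of its gradient. The only ingredient not already contained in the unweighted theory is the convergence of mollifications in the weighted norm $L^q_a$, and this is precisely where the Muckenhoupt hypothesis $a\in A_q$ enters.

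\emph{Step 1: weighted mollification.} Since $a\in A_q$ one has $a^{-1/(q-1)}\in L^1_{\mathrm{loc}}$, so by H\"older's inequality $L^q_a\hookrightarrow L^1_{\mathrm{loc}}(\Omega)$ and mollification is well defined on $L^q_a$. Fix a standard mollifier $(\eta_\varepsilon)$. If $g\in L^q_a$ has compact support $K\Subset\Omega$ (extended by $0$ to $\mathbb{R}^N$), the pointwise bound $|\eta_\varepsilon\ast g|\le\eta_\varepsilon\ast|g|\le C\,Mg$, where $M$ is the Hardy--Littlewood maximal operator, together with the $L^q_a$--boundedness of $M$ (Muckenhoupt's theorem), shows that $\{\,|\eta_\varepsilon\ast g-g|^q\,\}_{\varepsilon>0}$ is dominated in $L^1_a$ by $2^{q-1}\big(C^q(Mg)^q+|g|^q\big)$; since $\eta_\varepsilon\ast g\to g$ almost everywhere and $a\,dx\ll dx$, dominated convergence gives $\eta_\varepsilon\ast g\to g$ in $L^q_a$. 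If in addition $g\in W^{1,q}_a$ has compact support, then $g\in W^{1,1}(\mathbb{R}^N)$, $\nabla(\eta_\varepsilon\ast g)=\eta_\varepsilon\ast\nabla g$, the same argument applied componentwise gives $\nabla(\eta_\varepsilon\ast g)\to\nabla g$ in $L^q_a$, and classically $\eta_\varepsilon\ast g\to g$ in $W^{1,1}$. A consequence needed below is that any $g\in W^{1,q}_a(\Omega)$ with compact support in $\Omega$ lies in $W^{1,q}_{a,0}(\Omega)$, because the $\eta_\varepsilon\ast g$ belong to $C_0^\infty(\Omega)$ for small $\varepsilon$ and approximate $g$ in $W^{1,q}_a$.

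\emph{Step 2: the construction.} Choose open sets with $\Omega_j\Subset\Omega_{j+1}$, $\bigcup_j\Omega_j=\Omega$, $\Omega_{-1}=\Omega_0=\emptyset$, and set $A_j:=\Omega_{j+1}\setminus\overline{\Omega_{j-1}}$ for $j\ge1$; the $A_j$ form a locally finite open cover of $\Omega$. Let $(\phi_j)$ be a subordinate smooth partition of unity, so $\phi_j\in C_0^\infty(A_j)$, $0\le\phi_j\le1$, $\sum_j\phi_j\equiv1$ on $\Omega$. Then $u\phi_j\in W^{1,1}(\Omega)\cap W^{1,q}_a(\Omega)$ with $\nabla(u\phi_j)=\phi_j\nabla u+u\nabla\phi_j$ (here we use $u\in L^q_a$, $|\nabla u|\in L^q_a$, and that $\phi_j,\nabla\phi_j$ are bounded), and $u\phi_j$ has compact support in $A_j$. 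Given $\delta>0$, Step 1 lets us pick $\varepsilon_j>0$ so small that $\operatorname{supp}(\eta_{\varepsilon_j}\ast(u\phi_j))\subset A_j$, $\|\eta_{\varepsilon_j}\ast(u\phi_j)-u\phi_j\|_{W^{1,1}(\Omega)}<\delta\,2^{-j}$, and $\|\eta_{\varepsilon_j}\ast(u\phi_j)-u\phi_j\|_{L^q_a}+\|\nabla(\eta_{\varepsilon_j}\ast(u\phi_j))-\nabla(u\phi_j)\|_{L^q_a}<\delta\,2^{-j}$. Put $v_\delta:=\sum_j\eta_{\varepsilon_j}\ast(u\phi_j)$; by the support condition and local finiteness this sum is locally finite, so $v_\delta\in C^\infty(\Omega)$. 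Since $u=\sum_j u\phi_j$ and $\nabla u=\sum_j\nabla(u\phi_j)$ as locally finite sums, we have $v_\delta-u=\sum_j(\eta_{\varepsilon_j}\ast(u\phi_j)-u\phi_j)$ and similarly for the gradients; identifying the locally finite pointwise sum with the norm-convergent series of partial sums (completeness of $W^{1,1}(\Omega)$ and of $L^q_a$) yields $v_\delta\in W^{1,1}(\Omega)\cap W^{1,q}_a(\Omega)$ with $\|v_\delta-u\|_{W^{1,1}(\Omega)}<\delta$, $\|v_\delta-u\|_{L^q_a}<\delta$, and $\|\nabla v_\delta-\nabla u\|_{L^q_a}<\delta$. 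The sequence $v_n:=v_{1/n}$ is the one sought.

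\emph{Step 3: boundary values, and the main difficulty.} Assume now $a$ is as in Proposition \ref{prop:trace}. Each $u\phi_j$ has compact support in $\Omega$ and belongs to $W^{1,q}_a(\Omega)$, hence to $W^{1,q}_{a,0}(\Omega)$ by the consequence noted in Step 1, while $\eta_{\varepsilon_j}\ast(u\phi_j)\in C_0^\infty(\Omega)\subset W^{1,q}_{a,0}(\Omega)$. Thus every term of the $W^{1,q}_a$--convergent series $v_\delta-u=\sum_j(\eta_{\varepsilon_j}\ast(u\phi_j)-u\phi_j)$ lies in the closed subspace $W^{1,q}_{a,0}(\Omega)$, and therefore so does $v_\delta-u$. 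Since the trace operator $T$ of Proposition \ref{prop:trace} is bounded and vanishes on $C_0^\infty(\Omega)$, it vanishes on $W^{1,q}_{a,0}(\Omega)$, so $Tv_\delta=Tu$; that is, $v_\delta|_{\partial\Omega}=u|_{\partial\Omega}$ in the trace sense of $W^{1,q}_a(\Omega)$, in particular for $v_n=v_{1/n}$. The crux of the proof is Step 1: mollification need not converge in a weighted $L^q$ space for a general weight, and it is exactly the Muckenhoupt condition, via the $L^q_a$--boundedness of the maximal operator, that makes the domination argument work; the rest is the routine bookkeeping of Meyers--Serrin, the only extra subtlety being the identification of $v_n-u$ as an element of $W^{1,q}_{a,0}(\Omega)$ rather than merely as a compactly supported function.
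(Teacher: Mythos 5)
Your proof is correct and follows essentially the same route as the paper's, which merely sketches the argument: Muckenhoupt's theorem gives boundedness of the maximal (hence smoothing) operators on $L^q_a$, the Meyers--Serrin partition-of-unity scheme is run with both norms controlled simultaneously, and the boundary equality is obtained by identifying $v_n-u$ as an element of $W^{1,q}_{a,0}(\Omega)$ on which the trace vanishes. Your write-up simply supplies the details the paper delegates to Zhikov, the classical case, and \cite[Remark 3.5]{MY}.
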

\begin{proof}
If $a\in A_q$ then by Muckenhoupt's theorem the maximal operator is bounded in $L^{q}_{a}(\Omega)$ and hence the smoothing operators are uniformly bounded in this space. By this we get that $C^\infty (\Omega)$ is dense in $W^{1,q}_{a}(\Omega)$ (see \cite[Section 4]{Zhikov2}). The proof then proceeds more or less as in the classical case. The equality on $\partial \Omega$ follows as in \cite[Remark 3.5]{MY}.    
\end{proof}

\begin{remark}  
\label{remark}
Under the assumption that $a(x)\neq 0$ for all $x\in \partial\Omega$, we can actually take $(v_n)$ in $W^{1,q}(\Omega)\cap C^\infty(\Omega)$.
\end{remark}

\subsection{Functions of bounded variation}
In this part we follow the books \cite{Buttazzo} and \cite{gornymazon}. A function $u \in L^1(\Omega)$ belongs to $BV(\Omega)$ if its distributional derivative $Du$ is a finite Radon measure.
The total variation of the measure $Du$ is given by
\[|Du|(\Omega)=\sup \{ \langle Du, \phi \rangle : \,\phi \in C^\infty_0(\Omega),\, \|\phi \|_\infty\le 1\}.\]
When equipped with the norm
\[\|u\|_{BV}=\|u\|_1+|Du|(\Omega)\,,\]
the space $BV(\Omega)$ becomes a Banach space and possesses the following important compactness property: if $(u_n)$ is a bounded sequence in $BV(\Omega)$ then there exists a subsequence $(u_{n_k})$ and a function $u\in BV(\Omega)$ such that
\begin{equation*}
u_{n_k}\rightarrow u \, \text{ in } \, L^1(\Omega) \text{ and } \, Du_{n_k}\rightarrow Du\,  \text{ weak* as measures in }\, \Omega.
\end{equation*}
The Lebesgue decomposition of the measure $Du$ is
\begin{equation*}
    Du=\nabla u \, \mathcal{L}^N + D^su,
\end{equation*}
where $\nabla u$ and $D^s u$ denote its absolutely continuous and singular parts with respect to the Lebesgue measure $\mathcal{L}^N$. This decomposition shows that $W^{1,1}(\Omega)$ is a subspace of  $BV(\Omega )$ and $u \in  W^{1,1}(\Omega )$ iff $Du =\nabla u \, \mathcal{L}^N$. For functions in $W^{1,1}(\Omega )$ we will write  $\nabla u$ instead of $Du$.

 \begin{remark}\label{Derivative}
If $u\in BV(\Omega)\cap W_{a}^{1.q}(\Omega)$, then its distributional derivative is a function $g\in L^1_{loc}(\Omega; \mathbb R^N)$ and at the same time a finite Radon measure $Du$. Hence there exists $c>0$ such that 
$$\sup \{ \int_{\Omega}g\,\phi \,d x : \,\phi \in C^\infty_0(\Omega)^N,\, \|\phi \|_\infty\le 1\}\le c\,,$$
which implies that $g\in L^{1}(\Omega;\mathbb{R}^N)$. Thus $u\in W^{1,1}(\Omega)\cap W_{a}^{1.q}(\Omega)$.
 \end{remark}

\noindent In $BV(\mathbb R^N)$ the following Sobolev inequality holds (see \cite[Theorem A.10]{gornymazon})
$$\|u\|_{L^{\frac{N}{N-1}}(\mathbb R^N)}\leq C |Du|(\mathbb{R}^N)\,,\text{ for all }u\in BV(\mathbb R^N)\,.$$

\noindent If additionally we assume that $\Omega$ is a bounded Lipschitz domain then we have the following continuous embedding (see \cite[Theorem A.12]{gornymazon})
$$BV(\Omega) \hookrightarrow L^{s}(\Omega)\,, \text{ for all } 1\le s \le \frac{N}{N-1}$$
which is compact when $1\le s <\frac{N}{N-1}$. Finally, in this latter case using \cite[Theorem A.20]{gornymazon} it can be shown that the norm
%%%%%%%%%%%%%%%%%%%%%%%%%%%%%%%%%%%%%%%%%%%%%
\begin{equation} 
\nonumber
\|u\|=\int_{\Omega}|Du|+\int_{\partial \Omega}|u|\, d \mathcal{H}^{N-1},
\end{equation}
%%%%%%%%%%%%%%%%%%%%%%%%%%%%%%%%%%%%%%%%%%%%%%%%
is equivalent to the usual one of $BV(\Omega)$.
%%%%%%%%%%%%%%%%%%%%%%%%%%%%%%%%%%%%%%%%%%%%%%%%
\section{Main Results}
Our assumptions on the weight function $\alpha(\cdot)$ and the exponents $1<p<q$ are the following.
\begin{equation*}
(H): a\in C^{0,1}(\overline{\Omega})\cap A_{q}, \, a(x) \neq 0 \text{ on } \partial \Omega, \, \text{ and } \, \frac{q}{p}<1+\frac{1}{N}.
\end{equation*}

Our first goal is to prove the existence of a unique weak solution to a suitable approximate double-phase problem. For $h\in W^{1-\frac{1}{q},q}(\partial \Omega)$ let 
%%%%%%%%%%%%%%%%%%%%%%%%%%%%%%%%%%%%%%%%%%%%%%%
$$ W_{h}^{1,\theta_p} (\Omega)=\{ u\in W^{1,\theta_p}(\Omega): u|_{\partial \Omega}=h \, \hspace{2mm} \mathcal{H}^{N-1} \text{- a.e. on } \partial \Omega\}.$$
%%%%%%%%%%%%%%%%%%%%%%%%%%%%%%%%%%%%%%%%%%%%%%
As usual we say that $u \in W_{h}^{1,\theta_p} (\Omega)$ is a weak solution of the double-phase Dirichlet problem
    
\begin{equation}\label{pqdp}
\left\{
\begin{array}{rcl}
-\divergence \left(|\nabla u|^{p-2}\nabla u +a(x)|\nabla u|^{q-2}\nabla u \right)&=&0\,\text{ in }\Omega\\
u&=&h\,\text{ on }\partial\Omega\,,
\end{array}
\right.
\end{equation}
if 
\begin{equation}\label{weakformpq}
    \int_{\Omega} |\nabla u|^{p-2} \nabla u\cdot \nabla v \, d x +\int_{\Omega}a(x)|\nabla u|^{q-2} \nabla u\cdot \nabla v \, d x =0,
\end{equation}
for all $v \in W_0^{1,\theta_p} (\Omega)$.
 
\begin{proposition}\label{existencepq}
Let $h\in W^{1-\frac{1}{q},q}(\partial \Omega)$ and assume that (H) holds. Then, there exists a unique weak solution $u \in W_{h}^{1,\theta_p}(\Omega)$ to the double-phase Dirichlet problem \eqref{pqdp}, which is the unique minimizer of the functional 
\begin{equation*}
   \mathcal{F}(u) = \int_{\Omega} (\frac{|\nabla u|^{p}}{p} +a(x)\frac{|\nabla u|^{q}}{q})\, d x  
\end{equation*}
    in the set  $W_{h}^{1,\theta_p} (\Omega)$.
\end{proposition}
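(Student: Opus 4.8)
I would prove this by the direct method of the calculus of variations, using the reflexivity of $W^{1,\theta_p}(\Omega)$ and the convexity of $\mathcal{F}$, and then identify minimizers with weak solutions through the Euler--Lagrange equation. The first step is to check that the admissible class is a nonempty, convex, weakly closed subset of $W^{1,\theta_p}(\Omega)$. On a bounded domain $\theta_p(x,t)\le t^p+\|a\|_{L^\infty}t^q\le C(1+t^q)$, so $W^{1,q}(\Omega)\hookrightarrow W^{1,\theta_p}(\Omega)$; hence by Remark \ref{gagliardo} the extension $w:=\mathcal{E}h\in W^{1,q}(\Omega)\subset W^{1,\theta_p}(\Omega)$ lies in $W_h^{1,\theta_p}(\Omega)$, which is therefore nonempty, and $0\le\mathcal{F}(w)<\infty$, so $m:=\inf_{W_h^{1,\theta_p}(\Omega)}\mathcal{F}$ is finite. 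Since $\theta_p(x,t)\ge t^p\ge t-1$ we also have $W^{1,\theta_p}(\Omega)\hookrightarrow W^{1,1}(\Omega)$, and composing with the $W^{1,1}$-trace gives a bounded linear operator $T\colon W^{1,\theta_p}(\Omega)\to L^1(\partial\Omega)$, so $W_h^{1,\theta_p}(\Omega)=T^{-1}(\{h\})$ is closed and convex, hence weakly closed. For coercivity I would note $\mathcal{F}(u)\ge\frac1q\rho_{\theta_p}(\nabla u)\ge\frac1q\|\nabla u\|_{\theta_p}$ once $\|\nabla u\|_{\theta_p}\ge1$ (by convexity of the modular and $\theta_p(x,0)=0$), while writing $u=w+\phi$ with $\phi\in W_0^{1,\theta_p}(\Omega)$ and applying the Poincaré inequality \eqref{poincare} to $\phi$ yields $\|u\|_{\theta_p}\le\|w\|_{\theta_p}+C\|\nabla\phi\|_{\theta_p}\le C_w(1+\|\nabla u\|_{\theta_p})$; combining, $\mathcal{F}(u)\to+\infty$ whenever $\|u\|_{W^{1,\theta_p}}\to+\infty$ within $W_h^{1,\theta_p}(\Omega)$.

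Next I would establish weak lower semicontinuity: $\mathcal{F}$ is convex because $\xi\mapsto|\xi|^p$ and $\xi\mapsto|\xi|^q$ are convex on $\mathbb{R}^N$ and $a\ge0$, and being the integral of a nonnegative Carath\'eodory integrand convex in the gradient it is sequentially weakly lower semicontinuous on $W^{1,\theta_p}(\Omega)$ (one may argue directly: from $u_n\rightharpoonup u$ extract, via Mazur's lemma, convex combinations converging strongly in $W^{1,\theta_p}(\Omega)$, hence a subsequence of their gradients converging a.e. by the $(\Delta_2)$ condition, and conclude by Fatou's lemma together with convexity). Then the direct method applies: a minimizing sequence $(u_n)\subset W_h^{1,\theta_p}(\Omega)$ is bounded by coercivity, so up to a subsequence $u_n\rightharpoonup u$ in the reflexive space $W^{1,\theta_p}(\Omega)$; $u\in W_h^{1,\theta_p}(\Omega)$ by weak closedness, and $\mathcal{F}(u)\le\liminf_n\mathcal{F}(u_n)=m$, so $u$ is a minimizer. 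For uniqueness, if $u_0,u_1$ are minimizers then, since $\xi\mapsto|\xi|^p$ is strictly convex for $p>1$, convexity forces $\mathcal{F}\!\bigl(\tfrac{u_0+u_1}{2}\bigr)<m$ unless $\nabla u_0=\nabla u_1$ a.e.; as $\tfrac{u_0+u_1}{2}\in W_h^{1,\theta_p}(\Omega)$ cannot do better than $m$, we get $\nabla u_0=\nabla u_1$ a.e., so $u_0-u_1$ is a constant with vanishing trace on $\partial\Omega$, whence $u_0=u_1$.

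Finally I would identify the minimizer with the weak solution via the Euler--Lagrange equation. For $v\in W_0^{1,\theta_p}(\Omega)$ the map $t\mapsto\mathcal{F}(u+tv)$ is differentiable, with
\[
\frac{d}{dt}\Big|_{t=0}\mathcal{F}(u+tv)=\int_\Omega|\nabla u|^{p-2}\nabla u\cdot\nabla v\,dx+\int_\Omega a(x)|\nabla u|^{q-2}\nabla u\cdot\nabla v\,dx,
\]
differentiation under the integral sign being justified by dominated convergence, the dominating function being in $L^1(\Omega)$ by Young's/H\"older's inequality in the generalized Orlicz framework (the term multiplying $\nabla v$ lies in the Orlicz space conjugate to $L^{\theta_p}(\Omega)$ whenever $\nabla u,\nabla v\in L^{\theta_p}(\Omega)$). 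Since $u+tv$ is admissible for all $t$, stationarity at $t=0$ gives exactly \eqref{weakformpq}; conversely, if $u$ satisfies \eqref{weakformpq} then convexity yields $\mathcal{F}(u+v)\ge\mathcal{F}(u)$ for every $v\in W_0^{1,\theta_p}(\Omega)$, so $u$ is the unique minimizer. Hence \eqref{pqdp} has a unique weak solution, coinciding with the unique minimizer of $\mathcal{F}$ over $W_h^{1,\theta_p}(\Omega)$. I expect the only genuinely delicate points to be the duality and integrability estimates in the generalized Orlicz space (needed both for the lower semicontinuity and for the differentiation of $\mathcal{F}$) and the identification $W_0^{1,\theta_p}(\Omega)=\ker T$ used to peel off the boundary datum in the coercivity step; the abstract scheme itself is routine once reflexivity, the uniform Poincaré inequality \eqref{poincare}, and convexity are in hand.
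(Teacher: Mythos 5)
Your proposal is correct and follows essentially the same route as the paper: the direct method with the Gagliardo extension of $h$, the uniform Poincar\'e inequality for the coercivity/boundedness of the minimizing sequence, weak lower semicontinuity by convexity in the reflexive space $W^{1,\theta_p}(\Omega)$, and strict convexity for uniqueness. You supply more detail than the paper at two points it treats briefly --- the weak closedness of $W_h^{1,\theta_p}(\Omega)$ (the paper invokes weak continuity of the trace, you use convexity plus norm-closedness) and the Euler--Lagrange identification, which the paper dismisses as obvious --- but the underlying argument is the same.
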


\begin{proof} We will use the direct method of the calculus of variations. 
To this end let $(u_n)\in W_{h}^{1,\theta_p} (\Omega)$ be a minimizing sequence i.e. 
$$\lim_{n\to +\infty} \mathcal{F}(u_n) =\inf \mathcal{F}(u)\,.$$
By Gagliardo's extension theorem (see Remark \ref{gagliardo}) we may extend $h$ to a function $v\in W^{1,q}(\Omega) \hookrightarrow W^{1,\theta_p}(\Omega)$ with $v|_{\partial \Omega}=h$, and so we have that $u_n -v \in W_{0}^{1,\theta_p}(\Omega)$, for all $n\in \mathbb{N}$. By Poincar\'{e}'s inequality we get
\begin{align*}
    \|u_n \|_{\theta_p} = \|u_n -v+v\|_{\theta_p} &\leq  \, \|u_n -v\|_{\theta_p} + \|v \|_{\theta_p} \\
    &\leq C\|\nabla (u_n -v)\|_{\theta_p} + \|v \|_{\theta_p} \\
    &\leq C\|\nabla u_n\|_{\theta_p} + C'\|\nabla v \|_{q} + \|v \|_{q}.
\end{align*}
Since the boundedness of the sequence $\mathcal{F}(u_n)$ implies that $\rho_{\theta_p}(|\nabla u_n|)$ is bounded, we get that $\|\nabla u_n \|_{\theta_p}$ is bounded as well, see \cite[Proposition 2.15c)]{P}. Hence there exists $u\in W^{1,\theta_p}(\Omega)$ such that, up to a subsequence $u_n \stackrel{w}{\rightarrow} u$ in $W^{1,\theta_p}(\Omega)$ and by the weak to weak continuity of the trace $u\in W_{h}^{1,\theta_p} (\Omega)$. By weak lower semicontinuity \cite[Theorem 2.2.8]{DHHR} we get that 
%%%%%%%%%%%%%%%%%%%%%%%%%%%%%%
\begin{equation*}
\int_{\Omega} (\frac{|\nabla u|^{p}}{p} +a(x)\frac{|\nabla u|^{q}}{q})\, d x  \le \liminf_{n}{ \int_{\Omega} (\frac{|\nabla u_n|^{p}}{p}+a(x)\frac{|\nabla u_n|^{q}}{q})\,d x  }
\end{equation*}
%%%%%%%%%%%%%%%%%%%%%%%%%%%%%%%
and hence we conclude that 
%%%%%%%%%%%%%%%%%%%%%%%%%%%%%%%%%
\begin{equation*}
\mathcal{F}(u) \le \liminf_{n}{\mathcal{F}}(u_n) = \lim_{n} \mathcal{F}(u_n)=\inf \mathcal{F}(u). 
\end{equation*}
%%%%%%%%%%%%%%%%%%%%%%%%%%%%%%%%%%%%%
Thus, the infimum of $\mathcal{F}$ is attained and is unique due to the strict convexity of the functional. The fact that this minimizer is a weak solution of problem \eqref{pqdp} is obvious.
\end{proof}

We now give a suitable notion of weak solution for problem \eqref{limiting}. 

\begin{definition}
\label{Notion} 
A function $u\in W^{1,1}(\Omega)\cap W_{a}^{1,q}(\Omega)$ with $u|_{\partial \Omega}=h$, is said to be a weak solution of the Dirichlet problem \eqref{limiting} if there exists a vector field  $z\in L^{\infty}(\Omega)^N$ with $\|z\|_\infty \le 1$, such that 
\begin{eqnarray*}
\int_{\Omega} z \cdot \nabla \phi \, d x+ \int_{\Omega} a(x) |\nabla u|^{q-2} \nabla u \cdot \nabla \phi \, d x &=& 0\,,\text{ for all }\phi\in C^\infty_0(\Omega),\\
z \cdot \nabla u &=& |\nabla u| \, \text{ a.e. in } \Omega.
\end{eqnarray*}
\end{definition}

To proceed to our main result we first study the behavior of
the solutions $(u_p)$ of the approximate problems \eqref{pqdp} as $p \to 1$. To simplify things, with a slight abuse of notation, we will say that $(u_p)$ is a sequence and consider subsequences of it as $p \to 1$.
%%%%%%%%%%%%%%%%%%%%%%%%%%%%%%%%%%%%%%%%%%%%%%%%%%%
\begin{proposition}
\label{prop:behaviour} 
Let $h\in W^{1-\frac{1}{q},q}(\partial \Omega)$ and assume that (H) holds. If $(u_p)$ are the unique weak solutions of problems \eqref{pqdp}, then there exist a function $u\in W^{1,1}(\Omega)\cap W^{1,q}_{a}(\Omega)$ with $u|_{\partial \Omega}=h$ and a vector field $z \in L^{\infty}(\Omega)^N$, with $\|z\|_{\infty}\le 1$ such that as $p\rightarrow 1$, up to subsequences 
\begin{align*}
    u_p &\to u \, \text{ in } L^{s}(\Omega), \text{ for all } 1\le s<\frac{N}{N-1}\,,\\
    |\nabla u_p |^{p-2} \nabla u_p &\stackrel{w}{\rightarrow} z \, \text{ in } L^{r}(\Omega)^N \,, \text{ for all } \, 1\le r < +\infty,\\
 |\nabla u_p|^{q-2} \nabla u_p &\stackrel{w}{\rightarrow}  |\nabla u|^{q-2} \nabla u \, \text{ in } \, L^{q'}_{a}(\Omega)^N\,,\\
 \nabla u_p &\to \nabla u \, \text{ in } \, L^{q}_{a}(\Omega)^N\,.\\
\end{align*}

\end{proposition}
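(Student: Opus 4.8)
\textbf{Overall strategy.} The plan is to derive uniform-in-$p$ a priori bounds on the solutions $u_p$, extract weak/strong limits via the compactness of $BV(\Omega)$ and reflexivity of $L^{q}_a(\Omega)$, and then identify the limits — the delicate point being the strong convergence of the gradients in $L^q_a$, which is needed to identify the weak limit of $|\nabla u_p|^{q-2}\nabla u_p$ with $|\nabla u|^{q-2}\nabla u$ and, via Remark \ref{Derivative}, to place $u$ in $W^{1,1}(\Omega)\cap W^{1,q}_a(\Omega)$.

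\textbf{Step 1: uniform bounds.} Fix $v\in W^{1,q}(\Omega)$ with $v|_{\partial\Omega}=h$ (Gagliardo extension, Remark \ref{gagliardo}); then $v\in W^{1,\theta_p}(\Omega)$ for every $p$ in the admissible range, and $v$ is an admissible competitor in the minimization characterizing $u_p$ (Proposition \ref{existencepq}). Hence $\mathcal F_p(u_p)\le \mathcal F_p(v)$, i.e.
\[
\int_\Omega\Big(\frac{|\nabla u_p|^p}{p}+a(x)\frac{|\nabla u_p|^q}{q}\Big)\,dx\le \int_\Omega\Big(\frac{|\nabla v|^p}{p}+a(x)\frac{|\nabla v|^q}{q}\Big)\,dx.
\]
Since $|\nabla v|^p\le |\nabla v|+|\nabla v|^q$ pointwise and $|\Omega|<\infty$, the right-hand side is bounded independently of $p$ as $p\to1$. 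Dropping the nonnegative $a$-term gives $\int_\Omega|\nabla u_p|^p\,dx\le C$, and by Young/Jensen (or Hölder with exponent $1/p$) this yields a bound on $\int_\Omega|\nabla u_p|\,dx$ uniform in $p$; dropping the first term gives $\int_\Omega a(x)|\nabla u_p|^q\,dx\le C$, i.e. $\|\nabla u_p\|_{L^q_a}\le C$. Together with a Poincaré-type estimate as in the proof of Proposition \ref{existencepq} (subtracting $v$, which lies in $W^{1,1}\cap W^{1,q}_a$, and using that $u_p-v\in W^{1,\theta_p}_0$), one gets $\|u_p\|_{BV(\Omega)}\le C$ uniformly; the boundary term in the equivalent $BV$ norm is controlled by $\|h\|_{W^{1-1/q,q}(\partial\Omega)}$ via the trace.

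\textbf{Step 2: extraction of limits.} By $BV$-compactness there is $u\in BV(\Omega)$ and a subsequence with $u_p\to u$ in $L^s(\Omega)$ for all $1\le s<N/(N-1)$ and $Du_p\rightharpoonup Du$ weak* as measures. The field $\xi_p:=|\nabla u_p|^{p-2}\nabla u_p$ satisfies $|\xi_p|=|\nabla u_p|^{p-1}$, so $\|\xi_p\|_{L^r}^r=\int_\Omega|\nabla u_p|^{r(p-1)}\,dx$; since $r(p-1)\to0$ while $\int|\nabla u_p|^p$ stays bounded, interpolation with $|\Omega|<\infty$ gives $\|\xi_p\|_{L^r}\le C_r$ for every $r<\infty$ (for $p$ close to $1$). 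Hence, along a further subsequence, $\xi_p\rightharpoonup z$ in every $L^r(\Omega)^N$ for some $z$ with $|z|\le1$ a.e.\ (the bound $\|z\|_\infty\le1$ follows since $\|\xi_p\|_{L^r}\to|\Omega|^{1/r}$-type estimates force the limit to be a contraction — more precisely, $\liminf_p\|\xi_p\|_{L^r}\ge\|z\|_{L^r}$ for all $r$, and $\|\xi_p\|_{L^r}^r\to|\{\nabla u\ne0\}|$-bounded-by-$|\Omega|$, letting $r\to\infty$). Similarly $|\nabla u_p|^{q-2}\nabla u_p$ is bounded in $L^{q'}_a(\Omega)^N$, so it has a weak limit $\eta$ there, and $\nabla u_p$ is bounded in $L^q_a(\Omega)^N$, so $\nabla u_p\rightharpoonup w$ weakly in $L^q_a$; a distributional-derivative argument (testing against $C^\infty_0$ and comparing with $Du_p\rightharpoonup Du$) identifies $w$ with the absolutely continuous part $\nabla u$, and in particular shows $Du$ has an $L^q_a$ density, whence by Remark \ref{Derivative} $u\in W^{1,1}(\Omega)\cap W^{1,q}_a(\Omega)$. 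The boundary condition $u|_{\partial\Omega}=h$ is obtained in the $W^{1,q}_a$-trace sense using that $a$ is bounded away from $0$ on $\partial\Omega$ (Proposition \ref{prop:trace}): the traces of $u_p$ all equal $h$ and the trace operator is continuous on $W^{1,q}_a$ for the weak convergence there.

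\textbf{Step 3: strong $L^q_a$ convergence of gradients (the main obstacle).} This is the crucial point and uses monotonicity. Test the weak formulation \eqref{weakformpq} of problem \eqref{pqdp} with $v_p:=u_p-\phi_p$ where $\phi_p\in W^{1,\theta_p}_0(\Omega)$ approximates $u_p-u$; more cleanly, use $u_p-u_n$ type comparisons, or exploit that $u$ itself can be approximated by the Meyers–Serrin sequence of Proposition \ref{prop:MS}. Writing the monotonicity inequality for the $(q-1)$-homogeneous part,
\[
\int_\Omega a(x)\big(|\nabla u_p|^{q-2}\nabla u_p-|\nabla u|^{q-2}\nabla u\big)\cdot(\nabla u_p-\nabla u)\,dx\ge c\int_\Omega a(x)|\nabla u_p-\nabla u|^q\,dx
\]
(using the standard vectorial inequality, valid for $q\ge2$; for $1<q<2$ one uses the corresponding inequality with an exponent and a Hölder absorption), it suffices to show the left-hand side tends to $0$. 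Expand it: the cross terms converge by the already-established weak convergences (e.g.\ $\int a\,|\nabla u|^{q-2}\nabla u\cdot\nabla u_p\to\int a\,|\nabla u|^{q-2}\nabla u\cdot\nabla u$ since $\nabla u_p\rightharpoonup\nabla u$ in $L^q_a$); the term $\int a\,|\nabla u_p|^{q-2}\nabla u_p\cdot\nabla u$ converges since $|\nabla u_p|^{q-2}\nabla u_p\rightharpoonup\eta$ in $L^{q'}_a$ and $\nabla u\in L^q_a$; and the remaining term $\int a\,|\nabla u_p|^q$ is handled by passing to the limit in \eqref{weakformpq} tested with $u_p-v$ (or $u_p-u$ suitably approximated), where the $\xi_p\cdot\nabla(u_p-u)$ contribution vanishes because $\|\xi_p\|_{L^\infty}$-type bounds give $|\int\xi_p\cdot\nabla u_p|=|\int\xi_p\cdot\nabla u_p|\le\int|\nabla u_p|^{p-1}|\nabla u_p|\le(\int|\nabla u_p|^p)^{?}$ — more carefully, $\int\xi_p\cdot\nabla u_p=\int|\nabla u_p|^p$ is bounded but one only needs the \emph{difference} $\int\xi_p\cdot\nabla(u_p-u)$, and $\int\xi_p\cdot\nabla u\to\int z\cdot\nabla u$ while $\limsup_p\int\xi_p\cdot\nabla u_p$ is controlled. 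The net effect is that $\int a\,|\nabla u_p-\nabla u|^q\to0$, giving $\nabla u_p\to\nabla u$ strongly in $L^q_a(\Omega)^N$. Strong convergence then immediately upgrades the weak limit $\eta$ of $|\nabla u_p|^{q-2}\nabla u_p$ to $|\nabla u|^{q-2}\nabla u$ by continuity of the Nemytskii map on $L^q_a\to L^{q'}_a$ (and a.e.\ convergence along a subsequence plus the $L^{q'}_a$-bound, via Vitali). The hardest bookkeeping is making the test-function manipulations legitimate across the varying spaces $W^{1,\theta_p}_0$ — this is where Proposition \ref{prop:MS}, Remark \ref{remark} and the $p$-uniform Poincaré inequality \eqref{poincare} do the work, allowing a single approximation of $u-v$ that is admissible for all $p$ near $1$.
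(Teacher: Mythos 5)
Your Steps 1 and 2 are essentially the paper's argument: the uniform bound via the extension $v$ of $h$ (you use minimality $\mathcal F(u_p)\le\mathcal F(v)$ where the paper tests \eqref{weakformpq} with $u_p-v$, which is equivalent in substance), then $BV$-compactness, reflexivity of $W^{1,q}_a(\Omega)$, Remark \ref{Derivative}, weak continuity of the trace, and the $r<p'$ interpolation bound with $\|z\|_\infty=\lim_r\|z\|_r\le 1$. For the last two convergences you diverge from the paper, which obtains strong convergence of $\nabla u_p$ in $L^q_a$ via the Radon--Riesz property of the uniformly convex space $L^q_a(\Omega)$ (following \cite{MY}), i.e.\ weak convergence plus convergence of norms; you instead propose a monotonicity argument. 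That route is viable in principle, and the apparent circularity you flirt with is avoidable (in the expansion of $\int_\Omega a\,(|\nabla u_p|^{q-2}\nabla u_p-|\nabla u|^{q-2}\nabla u)\cdot(\nabla u_p-\nabla u)\,dx$ the two occurrences of the unidentified weak limit $\eta$ cancel), but as written Step 3 has a genuine gap.

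The gap is the decisive inequality $\limsup_p\int_\Omega a|\nabla u_p|^q\,dx\le\int_\Omega a\,\eta\cdot\nabla u\,dx$, which is asserted but never established: the passage that should prove it collapses into fragments (a literal ``$?$'' exponent, ``the net effect is that\dots''). To close it you must (i) use Proposition \ref{prop:MS} to produce admissible test functions approximating $u_p-u$ in $W^{1,\theta_p}_0(\Omega)$, yielding $\int_\Omega|\nabla u_p|^p\,dx+\int_\Omega a|\nabla u_p|^q\,dx=\int_\Omega\xi_p\cdot\nabla u\,dx+\int_\Omega a|\nabla u_p|^{q-2}\nabla u_p\cdot\nabla u\,dx$; (ii) pass to the limit $\int_\Omega\xi_p\cdot\nabla u\,dx\to\int_\Omega z\cdot\nabla u\,dx$; and (iii) bound $\liminf_p\int_\Omega|\nabla u_p|^p\,dx\ge\liminf_p\bigl(p\int_\Omega|\nabla u_p|\,dx-(p-1)|\Omega|\bigr)\ge|Du|(\Omega)\ge\int_\Omega z\cdot\nabla u\,dx$, using Young's inequality, lower semicontinuity of the total variation under $L^1$ convergence, and $\|z\|_\infty\le1$. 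None of (i)--(iii) appears in your proposal, and they are precisely where the work lies. A further point: hypothesis (H) with $p\to1$ forces $q\le 1+\frac1N\le 2$, so the ``standard vectorial inequality, valid for $q\ge2$'' that you treat as the main case never applies here; the argument must go through the $1<q<2$ inequality with the H\"older absorption you only mention in passing.
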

%%%%%%%%%%%%%%%%%%%%%%%%%%%%%%%%%%%%%%%%%%%%%%%%%%%%%%%%%%%%%%%%
\begin{proof}
First we will show that $(\|\nabla u_p\|_{\theta_p})$, is bounded for $p$ near 1. Note that without loss of generality we may assume that $\|\nabla u_p\|_{\theta_p} >1$, for all $p$.

By Gagliardo's extension theorem there exists $v\in W^{1,q}(\Omega) \hookrightarrow W^{1,\theta_p}(\Omega)$ with $v|_{\partial \Omega}=h$. Taking $v$ in the weak formulation \eqref{weakformpq} and using H\"{o}lder's inequality, the embeddings $L^{\theta_p}(\Omega)\hookrightarrow L^{p}(\Omega)$, $L^{\theta_p}(\Omega)\hookrightarrow L_{a}^{q}(\Omega)$ and the fact that $\|\nabla u_p\|_{\theta_p} >1$ we get
%%%%%%%%%%%%%%%%%%%%%%%%%%%%%%%%%%%%%%%%%%%%%%%%%
\begin{eqnarray}
\nonumber \text{I}_p&=&\int_{\Omega} \left(|\nabla u_p|^{p} +a(x)|\nabla u_p|^{q} \right) \, d x \\
\nonumber &=& \int_{\Omega} \left(|\nabla u_p|^{p-2} \nabla u_p +a(x)|\nabla u_p|^{q-2} \nabla u_p \right) \cdot \nabla v \, d x \\
    \nonumber &\le& \|\nabla u_p\|_p^{\frac{p}{p'}} \, \|\nabla v\|_p + \|\nabla u_p\|_{L_{a}^{q}}^{\frac{q}{q'}} \, \|\nabla v\|_{L_{a}^{q}}\\
    \nonumber &\le&  \|\nabla u_p\|_{\theta_p}^{\frac{p}{p'}} \, \|\nabla v\|_p + \|\nabla u_p\|_{\theta_p}^{\frac{q}{q'}} \, \|\nabla v\|_{L_{a}^{q}}\\
    \label{rhsIp} &\le& \|\nabla u_p\|_{\theta_p}^{\frac{q}{q'}} \, \left(\|\nabla v\|_p + \|\nabla v\|_{L_{a}^{q}}\right)\,.
\end{eqnarray}
%%%%%%%%%%%%%%%%%%%%%%%%%%%%%%%%%%%%%%%%%%%%%%%%%%%%%
Letting $\lambda_p= \|\nabla u_p\|_{\theta_p}>1$, we estimate $\text{I}_p$ from below and get that
%%%%%%%%%%%%%%%%%%%%%%%%%%%%%
\begin{eqnarray}
\nonumber\text{I}_p &=&{\lambda_p}^{p} \int_\Omega \bigg|\frac{\nabla u_p}{\lambda_p}\bigg|^pd x + {\lambda_p}^{q} \int_{\Omega} a(x) \bigg|\frac{\nabla u_p}{\lambda_p}\bigg|^q\, d x\\
\nonumber&\ge &\lambda_p^p \int_\Omega \bigg(\bigg|\frac{\nabla u_p}{\lambda_p}\bigg|^p + a(x) \bigg|\frac{\nabla u_p}{\lambda_p}\bigg|^q\bigg)\, d x\\
\label{lhsIp}&=&\lambda_p^p.\
\end{eqnarray}
Combining inequalities \eqref{rhsIp} and \eqref{lhsIp} we have
\begin{equation*}
   {\lambda_p}^{p-\frac{q}{q'}}\le \|\nabla v\|_p + \|\nabla v\|_{L_{a}^{q}}
\end{equation*}
and by H\"{o}lder's inequality
\begin{equation*}
    {\lambda_p}^{p-\frac{q}{q'}} \le \left( |\Omega|^{1-\frac{1}{q}} +\|a\|_{\infty}^q \right) \|\nabla v\|_q.
\end{equation*}
Thus, for $p$ close to 1, we have
\begin{equation}\label{ubOrlicznorms}
    {\lambda_p} \le \left( (|\Omega|^{\frac{1}{q'}} \, +\|a\|_{\infty}^{\frac{1}{q}}) \, \|\nabla v\|_q \right)^{\frac{1}{{1-\frac{q}{q'}}}} +1=M.
\end{equation}
Hence, the norms $\|\nabla u_p\|_{\theta_p}$ are bounded by a constant independent of $p$, for $p$ close to 1. Using the same argument as in the proof of Proposition \ref{existencepq}, we have 
\begin{equation*}
   \|u_p\|_{\theta_p} \leq C\|\nabla u_p\|_{\theta_p} + C'\|\nabla v \|_{q} + \|v \|_{q} \le CM+C''=M_1\,.
\end{equation*}
and by \eqref{poincare} $M_1$ is also independent of $p$. 

\noindent Next, again by the embedding $L^{\theta_p}(\Omega)\hookrightarrow L^{p}(\Omega)$, we have that 
$$\|\nabla u\|_p \le  M,$$ for p close to 1. Thus, by H\"{o}lder's inequality we get
$$\int_{\Omega} |\nabla u_p| \, dx \le |\Omega|^{1-\frac{1}{p}} \|\nabla u_p\|_p \le  |\Omega|^{1-\frac{1}{p}} M \le (M+1)=M_2,$$ for $p$ close to 1. Moreover, since $u_p|_{\partial \Omega}=h$, we can estimate the equivalent $BV(\Omega)$ norm 
$$\|u_p\|= \int_{\Omega} |\nabla u_p| \, d x \, + \int_{\partial \Omega} |u_p| \, d \mathcal{H}^{N-1} \le M_2 + \int_{\partial \Omega} |h| \, d \mathcal{H}^{N-1}.$$ 

Hence $(u_p)$ is bounded in $BV(\Omega)$ and so by $BV(\Omega)$'s compactness property there exists $u\in BV(\Omega)$ such that, passing to a subsequence which we denote again as $(u_p)$, we get that
%%%%%%%%%%%%%%%%%%%%%%%%%%%%%%%%%%%%%%%%%%%%%%%%%%%%%%%%%%
\begin{equation*}
u_p\rightarrow u \, \text{ in } \, L^1(\Omega) \text{ and } \, Du_p\rightarrow Du\,  \text{ weak* as measures in }\, \Omega.
\end{equation*}
%%%%%%%%%%%%%%%%%%%%%%%%%%%%%%%%%%%%%%%%%%%%%%%%%%%%%%%%%%
Note that since for $1\le s <\frac{N}{N-1}$ the embedding $BV(\Omega) \hookrightarrow L^{s}(\Omega)$ is compact  we actually have that
\begin{equation*}
 u_p\rightarrow u \, \text{ in } L^{s}(\Omega)\,,\text{ for all }1\le s <\frac{N}{N-1}\,.
\end{equation*} 
Since, $W^{1,\theta_p}(\Omega)\hookrightarrow W_{a}^{1,q}(\Omega)$ we get that $(u_p)$ is also bounded in ${W^{1,q}_{a}}$
and hence by reflexivity we have that, after passing to a further subsequence, $$u_p \stackrel{w}{\rightarrow} u\,,\text{ in } W_{a}^{1,q}(\Omega)\,.$$ 
Note that this, by Remark \ref{Derivative} implies that $u\in W^{1,1}(\Omega)\cap W_{a}^{1,q}(\Omega)$ and by the continuity of the trace operator that $u|_{\partial \Omega}=h$.

Since by \eqref{ubOrlicznorms} we have that $\|\nabla u\|_{\theta_p} \le M$, for $p$ close to 1 and $1\leq r<p'$, we get by H\"{o}lder's inequality that 
$$\int_{\Omega} |\nabla u_p |^{(p-1)r}\,dx \le |\Omega|^{1-\frac{(p-1)r}{p}} M^{(p-1)r} $$ and hence 
%%%%%%%%%%%%%%%%%%%%%%%%%%%%%%%%%%%%%%%%%%%%%
\begin{equation}
    \label{infinity}
\||\nabla u_p|^{p-2} \nabla u_p \|_r \le (1+|\Omega|)^{\frac{1}{r}}\,.
\end{equation}
%%%%%%%%%%%%%%%%%%%%%%%%%%%%%%%%%%%%%%%%%%%%%
Note that for any fixed $r\geq 1$ by taking $p$ close enough to $1$ we get that $1\leq r<p'$. Hence by \eqref{infinity} the sequence  $(|\nabla u_p|^{p-2} \nabla u_p)$ is bounded in $L^r(\Omega)^N$ and thus it converges weakly to a $z_r\in L^r(\Omega)^N$. By a diagonal argument we may find a subsequence and a common vector field $z\in L^r(\Omega)^N$ such that
$$|\nabla u_p|^{p-2}\nabla u_p\stackrel{w}{\rightarrow} z\, \text{ in }L^r(\Omega)^N\,,$$
for all $1\le r < +\infty$. By \eqref{infinity} and using the fact that the norm is lower semicontinuous we get that
\[\|z\|_r\leq (1+|\Omega|)^{\frac{1}{r}}\]
and thus
\[\|z\|_\infty=\lim_{r\rightarrow \infty}\|z\|_r\leq 1\,.\]

The last two convergences of the Proposition follow as in the proof of \cite[Proposition 3.6]{MY} using Proposition \ref{prop:MS} and the fact that $L^q_a(\Omega)$ being uniformly convex has the Radon-Riesz property.
\end{proof}

We are now ready for our main result.

\begin{theorem}\label{Thm:Main} 
Assume that (H) holds. Then for each $h\in W^{1-\frac{1}{q},q}(\partial \Omega)$ there exists a weak solution to problem \eqref{limiting}.
\end{theorem}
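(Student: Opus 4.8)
The plan is to take the functions $u$ and $z$ produced by Proposition~\ref{prop:behaviour} and verify that this pair satisfies Definition~\ref{Notion}. Proposition~\ref{prop:behaviour} already hands us $u\in W^{1,1}(\Omega)\cap W^{1,q}_a(\Omega)$ with $u|_{\partial\Omega}=h$ and a vector field $z\in L^\infty(\Omega)^N$ with $\|z\|_\infty\le 1$, so the structural requirements are in place and only the two equations remain to be checked.

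First I would establish the PDE in distributional form. Fix $\phi\in C^\infty_0(\Omega)$; then $\phi\in W^{1,\theta_p}_0(\Omega)$ for every $p$ in the relevant range, so the weak formulation \eqref{weakformpq} for $u_p$ gives
\begin{equation*}
\int_\Omega |\nabla u_p|^{p-2}\nabla u_p\cdot\nabla\phi\,dx+\int_\Omega a(x)|\nabla u_p|^{q-2}\nabla u_p\cdot\nabla\phi\,dx=0.
\end{equation*}
In the first integral I would pass to the limit using the weak convergence $|\nabla u_p|^{p-2}\nabla u_p\stackrel{w}{\rightarrow}z$ in $L^r(\Omega)^N$ (say $r=2$), since $\nabla\phi\in L^{r'}(\Omega)^N$; in the second integral I would use $|\nabla u_p|^{q-2}\nabla u_p\stackrel{w}{\rightarrow}|\nabla u|^{q-2}\nabla u$ in $L^{q'}_a(\Omega)^N$ against the test function $\nabla\phi$, which lies in $L^q_a(\Omega)^N$ because $a$ is bounded and $\phi$ is smooth with compact support. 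This yields $\int_\Omega z\cdot\nabla\phi\,dx+\int_\Omega a(x)|\nabla u|^{q-2}\nabla u\cdot\nabla\phi\,dx=0$ for all $\phi\in C^\infty_0(\Omega)$, which is exactly the first line of Definition~\ref{Notion}.

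The main obstacle is the second condition, $z\cdot\nabla u=|\nabla u|$ a.e.\ in $\Omega$. Since $\|z\|_\infty\le 1$ we automatically have $z\cdot\nabla u\le|\nabla u|$ pointwise, so it suffices to prove $\int_\Omega z\cdot\nabla u\,dx\ge\int_\Omega|\nabla u|\,dx$ (the reverse integral inequality, combined with the pointwise bound, forces equality a.e.). To get this I would compute $\int_\Omega z\cdot\nabla u\,dx$ by approximation: using Proposition~\ref{prop:MS} choose $(v_n)\subset W^{1,1}(\Omega)\cap C^\infty(\Omega)$ with $v_n\to u$ in $W^{1,1}(\Omega)$, $\nabla v_n\to\nabla u$ in $L^q_a(\Omega)^N$, and $v_n|_{\partial\Omega}=h$. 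Then $u-v_n$ has zero trace, so it is an admissible-type test object; the strategy is to plug $u_p-v_n$ (or rather a smooth approximation of it lying in $W^{1,\theta_p}_0(\Omega)$) into \eqref{weakformpq} to obtain
\begin{equation*}
\int_\Omega |\nabla u_p|^p\,dx+\int_\Omega a(x)|\nabla u_p|^q\,dx=\int_\Omega|\nabla u_p|^{p-2}\nabla u_p\cdot\nabla v_n\,dx+\int_\Omega a(x)|\nabla u_p|^{q-2}\nabla u_p\cdot\nabla v_n\,dx,
\end{equation*}
then discard the nonnegative term $\int_\Omega|\nabla u_p|^p\,dx\ge\int_\Omega|\nabla u_p|\,dx-|\Omega|$ appropriately, let $p\to1$ using the established weak convergences on the right-hand side together with $\nabla u_p\to\nabla u$ strongly in $L^q_a(\Omega)^N$ (so the $q$-phase terms converge cleanly), and use lower semicontinuity of the total variation under $L^1$ convergence, $\int_\Omega|\nabla u|\,dx\le\liminf_p\int_\Omega|\nabla u_p|\,dx$. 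After passing $p\to1$ this should give $\int_\Omega|\nabla u|\,dx+\int_\Omega a(x)|\nabla u|^q\,dx\le\int_\Omega z\cdot\nabla v_n\,dx+\int_\Omega a(x)|\nabla u|^{q-2}\nabla u\cdot\nabla v_n\,dx$; finally letting $n\to\infty$ and using the first line of Definition~\ref{Notion} with test functions approximating $u-v_n$ to cancel the cross terms, one arrives at $\int_\Omega|\nabla u|\,dx\le\int_\Omega z\cdot\nabla u\,dx$, completing the argument. The delicate points are justifying that $u_p-v_n$ may legitimately be used as a test function in the $W^{1,\theta_p}_0$ formulation (handled by density and Proposition~\ref{prop:MS}) and carefully tracking the $q$-phase cross terms so that they cancel in the limit rather than leaving an uncontrolled remainder; here the strong convergence $\nabla u_p\to\nabla u$ in $L^q_a$ from Proposition~\ref{prop:behaviour} is exactly what is needed.
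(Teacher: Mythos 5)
Your proposal is correct and follows the paper's overall architecture (pass to the limit in the approximate problems, use Proposition \ref{prop:behaviour}, Young's inequality and lower semicontinuity of the total variation), and the first half --- the distributional identity $\int_\Omega z\cdot\nabla\phi\,dx+\int_\Omega a(x)|\nabla u|^{q-2}\nabla u\cdot\nabla\phi\,dx=0$ --- is obtained exactly as in the paper. Where you genuinely diverge is in proving $z\cdot\nabla u=|\nabla u|$ a.e.: the paper localizes, testing \eqref{weakformpq} with $u_p\phi$ for a nonnegative cutoff $\phi\in C_0^\infty(\Omega)$, which forces it to control the extra product-rule terms (in particular the weak convergence $u_p|\nabla u_p|^{p-2}\nabla u_p\rightharpoonup uz$ and a splitting of the weighted cross term) and then to undo the localization via the approximation $v_n\to u\phi$ in $W^{1,q}_0(\Omega)$; it thereby reaches the pointwise inequality $|\nabla u|\le z\cdot\nabla u$ directly. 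You instead work globally, testing with $u_p-v_n$ to get the energy identity, passing to the limit to obtain $\int_\Omega|\nabla u|\,dx\le\int_\Omega z\cdot\nabla u\,dx$, and then invoking the elementary fact that a pointwise inequality $z\cdot\nabla u\le|\nabla u|$ together with the reverse integral inequality forces a.e.\ equality. Your route is somewhat leaner: it avoids the product-rule bookkeeping entirely, and the admissibility of $u_p-v_n$ as a test function is no worse than what the paper itself assumes for $u_p-v$ with $v$ the Gagliardo extension (write $u_p-v_n=(u_p-v)+(v-v_n)$ with $v-v_n\in W^{1,q}_0(\Omega)\hookrightarrow W^{1,\theta_p}_0(\Omega)$). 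Two small points to tighten: the Young-inequality step must be used in the $p$-dependent form $\int_\Omega|\nabla u_p|^p\,dx\ge p\int_\Omega|\nabla u_p|\,dx-(p-1)|\Omega|$ so that the error term vanishes as $p\to1$ (your written bound with a fixed $-|\Omega|$ would not suffice); and the final step needs no appeal to the first line of Definition \ref{Notion} --- since $\nabla v_n\to\nabla u$ in $L^1(\Omega)^N$ and in $L^q_a(\Omega)^N$ while $z\in L^\infty(\Omega)^N$, the terms $\int_\Omega z\cdot\nabla v_n\,dx$ and $\int_\Omega a(x)|\nabla u|^{q-2}\nabla u\cdot\nabla v_n\,dx$ converge directly to $\int_\Omega z\cdot\nabla u\,dx$ and $\int_\Omega a(x)|\nabla u|^{q}\,dx$, after which the weighted terms cancel and the desired inequality follows.
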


\begin{proof} 
For any $\phi \in C_{0}^{\infty}(\Omega)$ by the weak formulation \eqref{weakformpq} we have that
\begin{equation*}
    \int_{\Omega} |\nabla u_p|^{p-2} \nabla u_p\cdot \nabla \phi\,dx  +\int_{\Omega} a(x)|\nabla u_p|^{q-2} \nabla u_p  \cdot \nabla \phi \, d x =0\,.
\end{equation*}
Letting $p\to 1$ and using the previous Proposition we get that
\begin{equation}
\label{weak}
    \int_{\Omega} z\cdot \nabla\phi\,dx +\int_{\Omega} a(x)|\nabla u|^{q-2} \nabla u\cdot \nabla \phi \, d x =0 \,,\text{ for all }\phi\in C^\infty_0(\Omega)\,.
\end{equation}

\noindent To complete the proof we need to show that 
\begin{equation*}
z \cdot \nabla u=|\nabla u| \, \text{ a.e. in } \, \Omega.
\end{equation*}

\noindent To this end let $\phi \in C_{0}^{\infty}(\Omega)$ with $\phi\geq 0$ and take $u_p \phi$ as a test function  in \eqref{weakformpq}. Then
%%%%%%%%%%%%%%%%%%%%%%%%%%%%%%%%%%%%%%%%%%%%%%%%%%%%%
\begin{eqnarray*}
\int_{\Omega} \phi|\nabla u_p |^{p} d x+ \int_{\Omega} u_p |\nabla u_p |^{p-2} \nabla u_p \cdot \nabla \phi \, d x+\int_{\Omega} a(x)\phi|\nabla u_p |^q \, d x+\\
+\int_{\Omega} a(x) u_p|\nabla u_p |^{q-2} \nabla u_p \cdot \nabla \phi \, d x=0.
\end{eqnarray*}
%%%%%%%%%%%%%%%%%%%%%%%%%%%%%%%%%%%%%%%%%%%%%%%%%%%%%
By Young's inequality we have that
\[\int_\Omega\phi|\nabla u_p|\,d x\leq\frac{1}{p}\int_\Omega\phi |u_p|^pd x+\frac{p-1}{p}\int_\Omega\phi\, d x\]
and hence from the previous equation we get that
%%%%%%%%%%%%%%%%%%%%%%%%%%%%%%%%%%%%%%%%%%%%%%%%%%%%%
\begin{eqnarray}
\nonumber p\int_{\Omega} \phi|\nabla u_p |\, d x+ \int_{\Omega} u_p |\nabla u_p |^{p-2} \nabla u_p \cdot \nabla \phi \, d x
 +\int_{\Omega} a(x)\phi|\nabla u_p |^q \, d x+\\
 \label{final}+\int_{\Omega} a(x) u_p|\nabla u_p |^{q-2} \nabla u_p \cdot \nabla \phi \, d x \leq (p-1)\int_\Omega\phi \,d x\,.   
\end{eqnarray}
%%%%%%%%%%%%%%%%%%%%%%%%%%%%%%%%%%%%%%%%%%%%%%%%%%%%%
By Proposition \ref{prop:behaviour} we get that
\[u_p|\nabla u_p |^{p-2} \nabla u_p\stackrel{w}{\rightarrow} u z\text{ in }  L^{s}(\Omega)^N\,,\text{ for all }1\le s <\frac{N}{N-1}\]
\noindent and hence
\begin{equation*}
    \int_{\Omega} u_p \, (|\nabla u_p |^{p-2} \nabla u_p \cdot \nabla \phi )\, d x \to \int_{\Omega} u \, (z \cdot \nabla \phi )\, d x.
\end{equation*}
\noindent Next we have that
\begin{equation*}
 \int_{\Omega} u_p  a(x) |\nabla u_p |^{q-2} \nabla u_p \cdot \nabla \phi \, d x - \int_{\Omega} u a(x) |\nabla u |^{q-2} \nabla u  \cdot \nabla \phi \, d x =
 \end{equation*}
 \begin{eqnarray}
 \nonumber&=&\int_{\Omega} (u_p-u) a(x) |\nabla u_p |^{q-2} \nabla u_p  \cdot \nabla \phi \, d x+ \\
 %\end{equation*}
 %\begin{equation*}
\label{second}&+&\int_{\Omega} u  a(x) \left(|\nabla u_p |^{q-2} \nabla u_p-|\nabla u |^{q-2} \nabla u \right)  \cdot \nabla \phi \, d x\,.
\end{eqnarray}
Again by Proposition \ref{prop:behaviour}
$$\nabla u_p\rightarrow\nabla u\text{ in }L^{q}_a(\Omega)^N\text{ and }|\nabla u_p |^{q-2} \nabla u_p\stackrel{w}{\rightarrow} |\nabla u |^{q-2} \nabla u\text{ in }L^{q'}_a(\Omega)^N $$
and thus the second summand in \eqref{second} converges to 0 as $p\rightarrow 1$.

\noindent For the first summand using H\"{o}lder's and Poincar\'{e}'s inequality we get that
\begin{eqnarray*}
    \int_{\Omega} (u_p-u)  a(x) |\nabla u_p |^{q-2} \nabla u_p \cdot \nabla \phi \, d x &\le& \|\nabla \phi\|_{\infty} \, \|u_p - u\|_{L^{a}_{q}} \, \|\nabla u_p\|_{L^{q}_{a}}^{\frac{q}{q'}}\\
    &\le& C\|\nabla u_p - \nabla u\|_{L^{a}_{q}} \, \|\nabla u_p\|_{L^{q}_{a}}^{\frac{q}{q'}} 
\end{eqnarray*}
and hence it also converges to 0.

\noindent Using the above and the lower semicontinuity  of the total variation we pass to the limit in \eqref{final} and get that
%%%%%%%%%%%%%%%%%%%%%%%%%%%%%%%%%%%%%%%%%%%%%%%%%%%%%
\begin{eqnarray}
\nonumber\int_{\Omega} \phi|\nabla u |\, d x+ \int_{\Omega} uz \cdot \nabla \phi \, d x
 +\int_{\Omega} a(x)\phi|\nabla u |^q \, d x+\\
\label{final1}  +\int_{\Omega} a(x) u|\nabla u |^{q-2} \nabla u \cdot \nabla \phi \, d x \leq 0\,.   
\end{eqnarray}
%%%%%%%%%%%%%%%%%%%%%%%%%%%%%%%%%%%%%%%%%%%%%%%%%%%%%
If $v\in W^{1,q}_0(\Omega)$ then by density and using that $W^{1,q}_0(\Omega)\hookrightarrow W^{1,q}_{a,0}(\Omega)$, we get from \eqref{weak} that  
%%%%%%%%%%%%%%%%%%%%%%%%%%%%%%%%%%%%%%%%%%%%%%%%%%%%%
\begin{equation}
\label{weakw1q}
    \int_{\Omega} z\cdot \nabla v\,dx +\int_{\Omega} a(x)|\nabla u|^{q-2} \nabla u\cdot \nabla v \, d x =0\,.
\end{equation}
%%%%%%%%%%%%%%%%%%%%%%%%%%%%%%%%%%%%%%%%%%%%%%%%%%%%%
Since $u\phi\in W_{0}^{1,1}(\Omega)\cap W^{1,q}_{a. 0}(\Omega)$ we have by Proposition \ref{prop:MS} and Remark \ref{remark} that there exists a sequence $(v_n)$ in $W^{1,q}_0(\Omega)$ such that 
\[v_n\to u\phi \text{ in } W^{1,1}(\Omega)\text{ and } \nabla v_n \to \nabla (u\phi)\text{ in } L^{q}_{a}(\Omega)^N\,.\]
Since each $v_n$ satisfies \eqref{weakw1q}, passing to the limit we get that
\[\int_{\Omega} z \cdot \nabla (u\phi) \, d x+ \int_{\Omega} a(x) |\nabla u|^{q-2} \nabla u \cdot \nabla (u\phi) \, d x = 0\,.\]

\noindent Combining this with \eqref{final1} we get that
%%%%%%%%%%%%%%%%%%%%%%%%%%%%%%%%%%%%%%%%%%%%%%%
\begin{equation*}
\int_{\Omega} \phi  \, |\nabla u | \, d x \le  \int_{\Omega} \phi  \, z \cdot \nabla u  \, d x, \, \text{ for all }\, \phi \in C^{\infty}_{0}(\Omega)
\end{equation*}
%%%%%%%%%%%%%%%%%%%%%%%%%%%%%%%%%%%%%%%%%%%%%%%%%
and hence
$$|\nabla u|\le z\cdot \nabla u\text{ a.e. in } \Omega\,.$$ 
Since on the other hand $\|z\|_{\infty}\le 1$ implies that $z \cdot \nabla u \le |\nabla u|$ we infer that
\begin{equation*}
    z \cdot \nabla u = |\nabla u| \, \text{ a.e. in }\, \Omega,
\end{equation*}
which concludes the proof. 
\end{proof}
%%%%%%%%%%%%%%%%%%%%%%%%%%%%%%%%%%%%%%%%%%%%%%%%%%%%%%%%%%%%%%%
\begin{remark}
\label{weakformulation}
(Weak formulation) If $u$ is a weak solution of the Dirichlet problem \eqref{limiting} we also have the following weak formulation
\begin{equation*}
\int_{\Omega} |\nabla u| \, d x - \int_{\Omega} z \cdot \nabla v \, d x+ \int_{\Omega} a(x) |\nabla u|^{q-2} \nabla u \cdot \nabla (u-v) \, d x = 0,\
\end{equation*}
for all $v\in W^{1,1}(\Omega)\cap W^{1,q}_{a}(\Omega)$, with $v|_{\partial \Omega}=h$.
\end{remark}
%%%%%%%%%%%%%%%%%%%%%%%%%%%%%%%%%%%%%%%%%%%%%%%%%%%%%%%%%%%%%%%%%
\begin{proposition} 
\label{uniqueness}
Under the assumptions (H), the solution of problem \eqref{limiting} is unique.
    
\end{proposition}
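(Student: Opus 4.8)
The plan is a direct comparison argument based on the weak formulation of Remark~\ref{weakformulation} together with the strict monotonicity of the $q$-growth term. Suppose $u_1,u_2\in W^{1,1}(\Omega)\cap W^{1,q}_a(\Omega)$ are two weak solutions of \eqref{limiting} in the sense of Definition~\ref{Notion}, with associated vector fields $z_1,z_2\in L^\infty(\Omega)^N$, $\|z_i\|_\infty\le 1$. Both satisfy $u_i|_{\partial\Omega}=h$, so $v=u_2$ is an admissible test function in the weak formulation for $u_1$, and symmetrically $v=u_1$ is admissible for $u_2$. Writing $w=u_1-u_2\in W^{1,1}(\Omega)\cap W^{1,q}_a(\Omega)$, which has zero trace on $\partial\Omega$, these two identities read
\[
\int_\Omega |\nabla u_1|\,d x-\int_\Omega z_1\cdot\nabla u_2\,d x+\int_\Omega a(x)|\nabla u_1|^{q-2}\nabla u_1\cdot\nabla w\,d x=0
\]
and
\[
\int_\Omega |\nabla u_2|\,d x-\int_\Omega z_2\cdot\nabla u_1\,d x-\int_\Omega a(x)|\nabla u_2|^{q-2}\nabla u_2\cdot\nabla w\,d x=0 .
\]

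I would then add these two equations and split the result into a ``$1$-Laplacian part'', namely $\int_\Omega(|\nabla u_1|-z_2\cdot\nabla u_1)\,d x+\int_\Omega(|\nabla u_2|-z_1\cdot\nabla u_2)\,d x$, and a ``$q$-Laplacian part'', namely $\int_\Omega a(x)\big(|\nabla u_1|^{q-2}\nabla u_1-|\nabla u_2|^{q-2}\nabla u_2\big)\cdot(\nabla u_1-\nabla u_2)\,d x$. The first part is nonnegative because $\|z_i\|_\infty\le 1$ forces $z_2\cdot\nabla u_1\le|\nabla u_1|$ and $z_1\cdot\nabla u_2\le|\nabla u_2|$ pointwise a.e.\ in $\Omega$. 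The second part is nonnegative by the elementary inequality $\big(|\xi|^{q-2}\xi-|\eta|^{q-2}\eta\big)\cdot(\xi-\eta)\ge 0$ for all $\xi,\eta\in\mathbb R^N$, which holds with equality if and only if $\xi=\eta$. Since the two nonnegative parts sum to zero, each of them vanishes; all the integrals are finite since $z_i\in L^\infty(\Omega)^N$, $\nabla u_j\in L^1(\Omega)^N$, $|\nabla u_i|^{q-2}\nabla u_i\in L^{q'}_a(\Omega)^N$ and $\nabla w\in L^{q}_a(\Omega)^N$, by H\"older's inequality.

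From the vanishing of the $q$-Laplacian part the nonnegative integrand $a(x)\big(|\nabla u_1|^{q-2}\nabla u_1-|\nabla u_2|^{q-2}\nabla u_2\big)\cdot(\nabla u_1-\nabla u_2)$ must be zero a.e.\ in $\Omega$; since $a\in A_q$ entails $a(x)>0$ for a.e.\ $x\in\Omega$, this gives $\big(|\nabla u_1|^{q-2}\nabla u_1-|\nabla u_2|^{q-2}\nabla u_2\big)\cdot(\nabla u_1-\nabla u_2)=0$ a.e., and by the strict monotonicity above, $\nabla u_1=\nabla u_2$ a.e.\ in $\Omega$, i.e.\ $\nabla w=0$ a.e. As $\Omega$ is a (connected) domain, $w$ is then constant, and since $w$ has zero trace on $\partial\Omega$ that constant is $0$; hence $u_1=u_2$.

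The only genuinely delicate point is that the $q$-growth term controls the \emph{full} gradient, not merely the gradient on $\{a>0\}$: this is exactly where the hypothesis $a\in A_q$ (hence $a>0$ a.e.) is used, so that $\{a=0\}$ is Lebesgue-negligible and strict monotonicity of $\xi\mapsto|\xi|^{q-2}\xi$ forces equality of the two gradients a.e. Everything else is a routine substitution of admissible competitors into the weak formulation and the Cauchy--Schwarz bound $|z_i\cdot\nabla u_j|\le|\nabla u_j|$.
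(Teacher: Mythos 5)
Your argument is correct and is essentially the paper's own proof: both test the weak formulation of Remark~\ref{weakformulation} cross-wise with $u_2$ and $u_1$, add, use $\|z_i\|_\infty\le 1$ to absorb the $1$-Laplacian terms, and invoke strict monotonicity of $\xi\mapsto|\xi|^{q-2}\xi$ together with $a>0$ a.e.\ (from $a\in A_q$) to get $\nabla u_1=\nabla u_2$. The only cosmetic difference is at the very end, where the paper concludes via Poincar\'e's inequality while you use connectedness plus the zero trace; both are fine.
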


\begin{proof}
    Let $u_1,u_2\in W^{1,1}(\Omega)\cap W_a^{1,q}(\Omega)$ be two solutions of \eqref{limiting}. Hence there exist two vector fields $z_1,\,z_2\in L^\infty(\Omega)^N$ such that the conditions of Definition \ref{Notion} are satisfied. Testing with $u_2$ in the weak formulation for $u_1$ and vice versa we obtain
%%%%%%%%%%%%%%%%%%%%%%%%%%%%
\begin{equation}
\nonumber
\int_\Omega |\nabla u_1| \, d x - \int_\Omega z_1 \cdot \nabla u_2 \, d x + \int_\Omega a(x) |\nabla u_1|^{q-2} \nabla u_1 \cdot \nabla (u_1-u_2)\, d x = 0
\end{equation}
%%%%%%%%%%%%%%%%%%%%%%%%%%%%%%%%%%
and
%%%%%%%%%%%%%%%%%%%%%%%%%%
\begin{equation}
\nonumber
\int_\Omega |\nabla u_2| \, d x- \int_\Omega z_2 \cdot \nabla u_1 \, d x + \int_\Omega a(x) |\nabla u_2|^{q-2} \nabla u_2 \cdot \nabla (u_2-u_1)\, d x = 0\,.
\end{equation}
%%%%%%%%%%%%%%%%%%%%%%%%%%
Adding the above equations we get
%%%%%%%%%%%%%%%%%%%%%%%%%%%%%%%%%%%%%
\begin{eqnarray*}
\int_\Omega |\nabla u_1| \, d x&+&\int_\Omega |\nabla u_2| \, d x -\int_\Omega z_1 \cdot \nabla u_2 \, d x -\int_\Omega z_2 \cdot \nabla u_1 \, d x +\\
&+ &\int_\Omega a(x) \left(|\nabla u_1|^{q-2} \nabla u_1-|\nabla u_2|^{q-2} \nabla u_2 \right) \nabla (u_1-u_2)\, d x =0.\
\end{eqnarray*}
%%%%%%%%%%%%%%%%%%%%%%%%%%
Since $\|z_1\|_\infty \le 1$ and $\|z_2\|_\infty \le 1$ we have that 
\begin{equation*}
\int_\Omega z_1 \cdot \nabla u_2 \, d x \le \int_\Omega |\nabla u_2|\, d x \, \text{ and } \, 
\int_\Omega z_2 \cdot \nabla u_1 \, d x \le \int_\Omega |\nabla u_1| \, d x
\end{equation*}
%%%%%%%%%%%%%%%%%%%%%%%%%%%%%%%%%%%%%5
and hence
\begin{equation*}
\int_\Omega a(x) \left(|\nabla u_1|^{q-2} \nabla u_1-|\nabla u_2|^{q-2} \nabla u_2 \right) \cdot \nabla (u_1-u_2)\, d x \le 0\,.
\end{equation*}
But since the integrand is non-negative this implies that
\begin{equation*}
\int_\Omega a(x) \left(|\nabla u_1|^{q-2} \nabla u_1-|\nabla u_2|^{q-2} \nabla u_2 \right) \cdot \nabla (u_1-u_2)\, d x = 0\,.
\end{equation*}
Hence we conclude that
\[\nabla u_1=\nabla u_2\,,\text{ a.e. in }\Omega\]
and since $u_{1}|_{\partial \Omega}=u_{2}|_{\partial \Omega}$, Poincar\'e's inequality yields $u_1=u_2$.
\end{proof}
%%%%%%%%%%%%%%%%%%%%%%%%%%%%%%%%%%%%%%%%%%%%%%%%%%
%%%%%%%%%%%%%%%%%%%%%%%%%%%%%%%%%%%%%%%%%%%%%%%%%%
%%%%%%%%%%%%%%%%%%%%%%%%%%%%%%%%%%%%%%%%%%%%%%%%%%
We conclude this paper with a variational characterization of the solution of problem \eqref{limiting}. In particular we show that it is the unique minimizer of the minimization problem
%%%%%%%%%%%%%%%%%%%%%%%%%%%%%%%%%%%%%%%%%%
\begin{equation}
\nonumber
\min\left\{\mathcal{I}(u):\, u\in W^{1,1}(\Omega)\cap W^{1,q}_{a}(\Omega) \text{ with } u|_{\partial \Omega}=h 
\right\},
\end{equation}
where 
%%%%%%%%%%%%%%%%%%%%%%%%%%%%%%%%%%%%%%%%%%%%%%%%%%
\begin{equation}
\nonumber
\mathcal{I}(u) =\int_{\Omega}|\nabla u |\, d x+\frac{1}{q} \int_{\Omega} a(x) |\nabla u |^{q} d x\,.
\end{equation}
%%%%%%%%%%%%%%%%%%%%%%%%%%%%%%%%%%%%%%%%%%%%%%%%%
We have the following.
\begin{proposition} 
The function $u\in W^{1,1}(\Omega)\cap W^{1,q}_{a}(\Omega)$ is the unique weak solution of problem \eqref{limiting} if and only if it is the unique minimizer of the functional $\mathcal{I}$.
\end{proposition}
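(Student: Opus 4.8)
The plan is to prove the stronger assertion that the unique weak solution of \eqref{limiting}, which exists by Theorem \ref{Thm:Main} and is unique by Proposition \ref{uniqueness}, coincides with the unique minimizer of $\mathcal{I}$ over the admissible set
\[
K=\{\,v\in W^{1,1}(\Omega)\cap W^{1,q}_{a}(\Omega):\ v|_{\partial\Omega}=h\,\};
\]
the claimed equivalence is then immediate. So I would establish: (i) every weak solution of \eqref{limiting} minimizes $\mathcal{I}$ over $K$; and (ii) $\mathcal{I}$ admits at most one minimizer over $K$.

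For (i), let $u$ be a weak solution with associated vector field $z$, and fix $v\in K$ (so that $\mathcal{I}(v)<\infty$, since $v\in W^{1,1}(\Omega)\cap W^{1,q}_{a}(\Omega)$). I split
\[
\mathcal{I}(v)-\mathcal{I}(u)=\int_{\Omega}\bigl(|\nabla v|-|\nabla u|\bigr)\,dx+\frac1q\int_{\Omega}a(x)\bigl(|\nabla v|^{q}-|\nabla u|^{q}\bigr)\,dx .
\]
For the first term I use that $\|z\|_{\infty}\le 1$, hence $|\nabla v|\ge z\cdot\nabla v$ a.e., together with the weak formulation of Remark \ref{weakformulation} (applicable precisely because $u$ and $v$ share the boundary datum $h$), which rearranges to
\[
\int_{\Omega}z\cdot\nabla v\,dx=\int_{\Omega}|\nabla u|\,dx+\int_{\Omega}a(x)|\nabla u|^{q-2}\nabla u\cdot\nabla(u-v)\,dx ,
\]
so the first term is bounded below by $-\int_{\Omega}a(x)|\nabla u|^{q-2}\nabla u\cdot\nabla(v-u)\,dx$. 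For the second term I invoke the gradient inequality for the convex map $\xi\mapsto\frac1q|\xi|^{q}$, namely $\frac1q|\nabla v|^{q}\ge\frac1q|\nabla u|^{q}+|\nabla u|^{q-2}\nabla u\cdot(\nabla v-\nabla u)$, which after multiplying by $a\ge0$ and integrating gives a lower bound by $\int_{\Omega}a(x)|\nabla u|^{q-2}\nabla u\cdot\nabla(v-u)\,dx$. Adding the two bounds the cross terms cancel and $\mathcal{I}(v)\ge\mathcal{I}(u)$; all the integrals here are finite since $|\nabla u|^{q-2}\nabla u\in L^{q'}_{a}(\Omega)^{N}$ and $\nabla(u-v)\in L^{q}_{a}(\Omega)^{N}$.

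For (ii), note that $K$ is convex and $\mathcal{I}$ is convex, being the sum of $v\mapsto\int_{\Omega}|\nabla v|\,dx$ and $v\mapsto\frac1q\int_{\Omega}a(x)|\nabla v|^{q}\,dx$. If $u_{1},u_{2}\in K$ were two minimizers, then $w=\tfrac12(u_{1}+u_{2})\in K$ satisfies $\mathcal{I}(w)\le\tfrac12\mathcal{I}(u_{1})+\tfrac12\mathcal{I}(u_{2})=\inf_{K}\mathcal{I}$, which forces equality in the convexity inequality, in particular for the $q$-term. Since $\xi\mapsto|\xi|^{q}$ is strictly convex on $\mathbb{R}^{N}$ (as $q>1$) and $a(x)>0$ for a.e. $x$ because $a\in A_{q}$, this yields $\nabla u_{1}=\nabla u_{2}$ a.e. in $\Omega$; as $u_{1}|_{\partial\Omega}=u_{2}|_{\partial\Omega}$, Poincaré's inequality then gives $u_{1}=u_{2}$, exactly as at the end of the proof of Proposition \ref{uniqueness}.

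Finally, let $w$ be the unique weak solution of \eqref{limiting}. By (i), $w$ minimizes $\mathcal{I}$ over $K$, and by (ii) it is the only minimizer; hence for $u\in W^{1,1}(\Omega)\cap W^{1,q}_{a}(\Omega)$ one has: $u$ is the unique weak solution $\iff u=w\iff u$ is the unique minimizer of $\mathcal{I}$. I expect the only genuinely delicate point to be the chain of (in)equalities in step (i) — in particular keeping the signs straight in the Anzellotti-type identity of Remark \ref{weakformulation} and in the subgradient inequality for the $q$-term — while the crucial structural input in step (ii) is that membership in the Muckenhoupt class $A_{q}$ forces $a>0$ a.e., which is what upgrades mere existence of a minimizer to uniqueness.
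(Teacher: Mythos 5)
Your proposal is correct and follows essentially the same route as the paper: part (i) is the paper's argument with Young's inequality $|\nabla u|^{q-2}\nabla u\cdot\nabla v\le\frac1q|\nabla v|^q+\frac1{q'}|\nabla u|^q$ rewritten as the subgradient inequality for $\xi\mapsto\frac1q|\xi|^q$, and part (ii) merely spells out the ``strict convexity'' assertion the paper uses, correctly noting that $a>0$ a.e.\ (from $a\in A_q$) plus the common boundary datum and Poincar\'e is what upgrades convexity to uniqueness. No gaps.
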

%%%%%%%%%%%%%%%%%%%%%%%%%%%%%%%%%%%%%%%%%%%%%%%%%%
\begin{proof}
The solution $u$ of problem \eqref{limiting}, by Theorem \ref{Thm:Main} and Proposition \ref{uniqueness}, exists and is unique. Since by strict convexity, the minimizer of $\mathcal{I}$ is also unique it is enough to show that $u$ is a minimizer of $\mathcal{I}$.

To this end let $v\in W^{1,1}(\Omega)\cap W^{1,q}_{a}(\Omega)$ with $v|_{\partial \Omega}=h$. By the weak formulation of Remark \ref{weakformulation} we have that
\begin{equation*}
\int_{\Omega} |\nabla u| \, d x - \int_{\Omega} z \cdot \nabla v \, d x+ \int_{\Omega} a(x) |\nabla u|^{q-2} \nabla u \cdot \nabla (u-v) \, d x = 0\,.
\end{equation*}
Using the fact that $\|z \|_{\infty} \le 1$ and Young's inequality we get that
\begin{eqnarray*}
\int_{\Omega} |\nabla u| \, d x +\int_{\Omega} a(x) |\nabla u|^q\, d x= \int_{\Omega} z \cdot \nabla v \, d x+ \int_{\Omega} a(x) |\nabla u|^{q-2} \nabla u \cdot \nabla v \, d x\\
\leq\int_{\Omega} |\nabla v| \, d x+\frac{1}{q'} \int_{\Omega} a(x) |\nabla u |^{q} d x + \frac{1}{q} \int_{\Omega} a(x) |\nabla v |^{q} d x\,.
\end{eqnarray*}
But this implies that
\begin{equation*}
\int_{\Omega}|\nabla u |\, d x+\frac{1}{q} \int_{\Omega} a(x) |\nabla u |^{q}\, d x\leq \int_{\Omega}|\nabla v |\, d x+\frac{1}{q} \int_{\Omega} a(x) |\nabla v |^{q}\, d x
\end{equation*}
i.e.
\begin{equation*}
\mathcal{I}(u) \le \mathcal{I}(v)\,.
\end{equation*}

\end{proof}

\noindent {\bf Acknowledgments.} The first author has been supported by the Special Account for Research Funding of the National Technical University of Athens.
%%%%%%%%%%%%%%%%%%%%%%%%%%%%%%%%%%%%%%%%%%%%%%%%%%%%%%%%%%%%%%%%%%%%%%%%
%%%%%%%%%%%%%%%%%%%%%%%%%%%%%%%%%%%%%%%%%%%%%%%%%%%%%%%%%%%%%%%%%%%%%%%%
%%%%%%%%%%%%%%%%%%%%%%%%%%%%%%%%%%%%%%%%%%%%%%%%%%%%%%%%%%%%%%%%%%%%%%%%


\begin{thebibliography}{plain}
\bibitem{ABCM} F. Andreu, C. Ballester, V. Caselles, J.M. Mazón, {\it The Dirichlet problem for the total variation 
flow}. J. Funct. Anal. {\bf 180} (2001), 347–403.
\bibitem{Anzellotti} G. Anzellotti, {\it Pairings between measures and bounded functions and compensated compactness}, Ann Mat. Pura Appl. {\bf 135} (1983), 293-318.
\bibitem{Buttazzo} H. Attouch, G. Buttazzo, and G. Michaille, {\it Variational Analysis in Sobolev and BV Spaces}, MPS–SIAM Series on Optimization, Philadelphia, 2006.
\bibitem{mingione_1} P. Baroni, M. Colombo, G. Mingione, {\it Harnack inequalities for double phase functionals}, Nonlinear Analysis: Theory, Methods and Applications {\bf 121} (2015), 206-222.
\bibitem{mingione_2} M. Colombo, G. Mingione, {\it Regularity for Double Phase Variational Problems}, Arch Rational Mech Anal. {\bf 215} (2015), 443–496.
\bibitem{DHHR} L. Diening, P. Harjulehto, P. H\"{a}st\"{o}, M. R\^{u}\v{z}i\v{c}ka, {\it Lebesgue and Sobolev Spaces with Variable Exponents}, Lecture Notes in Mathematics, vol. 2017, Springer, Heidelberg, 2011.
\bibitem{EHH1} M. Eleuteri, P. Harjulehto, P. H\"ast\"o. {\it Bounded variation spaces with gener
alized Orlicz growth related to image denoising}. Mathematische Zeitschrift {\bf 310} (2025), Article no. 26.
\bibitem{EHH2} M. Eleuteri, P. Harjulehto and P. H\"{a}st\"{o}, {\it Minimizers of abstract generalized Orlicz-bounded variation energy}, Math. Methods Appl. Sci., {\bf 47} (2024), no. 15, 11789-12365.
\bibitem{Gagliardo} E. Gagliardo, {\it Caratterizzazione delle Tracce sulla Frontiera Relative ad Alcune Classi di Funzioni in n Variabili}, Rend. Sem. Mat. Padova, {\bf 27} (1957), 284-305.
\bibitem{GLM} W. G\'orny, M. {\L}asica, and A. Matsoukas. {\it Euler-Lagrange equations for variable-growth total
variation}, arXiv:2504.13559
\bibitem{gornymazon}
W. G{\'o}rny, J. M. Maz{\'o}n, \textit{Functions of Least Gradient}, Monographs in Mathematics, Vol. 110, Birkh{\"a}user, 2024.
\bibitem{harjulehto_1} P. Harjulehto, P. H\"ast\"o, {\it Double phase image restoration}, J. Math. Anal. Appl. {\bf 501} (2021), Paper no. 123832.
\bibitem{harjbook} P. Harjulehto, P. H\"ast\"o, {\it Orlicz Spaces and Generalized Orlicz Spaces}, Lecture notes in Mathematics 2236, Springer Nature Switzerland, 2019.
\bibitem{Marcelini1}  P. Marcellini, {\it Growth conditions and regularity for weak solutions to nonlinear elliptic
 pdes}, J. Math. Anal. Appl. {\bf 501} (2021), no. 1, Paper no. 124408.
\bibitem{Marcelini2} P. Marcellini, {\it Regularity and existence of solutions of elliptic equations with p,q-growth
 conditions}, J. Differential Equations {\bf 90} (1991), 1–30.
\bibitem{MY} A. Matsoukas, N. Yannakakis, {\it The double phase Dirichlet problem when the lowest exponent is equal to 1}, J. Math. Anal. Appl. {\bf 526} (2023), Paper no. 127270.
\bibitem{MRS} J.M. Maz\'{o}n, J.D. Rossi, S. Segura de Le\'{o}n, {\it Functions of least gradient and 1-harmonic functions}. Indiana Univ. Math. J. {\bf 63} (2014), 1067–1084.
\bibitem{MinRad} G. Mingione, V.D. Rădulescu, {\it Recent developments in problems with nonstandard growth and nonuniform ellipticity}, J. 
Math. Anal. Appl. {\bf 501} (1) (2021), 125-197.
\bibitem{P} N.S. Papageorgiou, {\it Double-phase problems: a survey of some recent results}, Opusc. Math. {\bf 42} (2022), 257–278.
\bibitem{PRY} N.S. Papageorgiou, V.D. Rădulescu, Y. Zhang, {\it Anisotropic singular double phase Dirichlet problems}, Discrete Contin. Dyn. Syst., Ser. S {\bf 14} (2021), 4465–4502.
\bibitem{pap_2} N.S. Papageorgiou, V.D. Rădulescu, Y. Zhang, Resonant double phase equations, Nonlinear Anal. Real World Appl. {\bf 64} (2022), Paper no. 103454.
\bibitem{R} V.D. Rădulescu, {\it Isotropic and anisotropic double-phase problems: old and new}, Opusc. Math. {\bf 39} (2019), 259–279.
\bibitem{Zhikov} V. V. Zhikov, {\it Averaging of functionals of the calculus of variations and elasticity theory}, Izv. Acad. Nauk SSSR, Ser. Mat. {\bf 50} (1986), 675-710.
\bibitem{Zhikov2} V. V. Zhikov, {\it Weighted Sobolev Spaces},  Sb. Math. {\bf 189} (1998), 1139-1170.



\end{thebibliography}
\end{document}